\documentclass{article}
\usepackage{amsfonts}
\usepackage{amsmath}
\usepackage{amssymb}
\usepackage{graphicx}
\usepackage{amsopn}
\usepackage{amsthm}
\usepackage{url} 
\usepackage{color}
\usepackage{hyperref} 
\usepackage[all]{xy}
\usepackage{amsrefs}
\usepackage{etoolbox, xspace}

\setcounter{MaxMatrixCols}{10}
\setlength{\evensidemargin}{0in}
\setlength{\oddsidemargin}{0in}
\setlength{\textwidth}{6.25in}
\setlength{\textheight}{8.5in}
\setlength{\topmargin}{0in}
\setlength{\headheight}{0in}
\setlength{\headsep}{0in}
\setlength{\itemsep}{0pt}

\setlength{\parskip}{\smallskipamount}
\newtheorem{theorem}{Theorem}[section]

\newtheorem{claim}[theorem]{Claim}

\newtheorem{conjecture}[theorem]{Conjecture}
\newtheorem{corollary}[theorem]{Corollary}
\newtheorem{lemma}[theorem]{Lemma}

\newtheorem{proposition}[theorem]{Proposition}
\theoremstyle{definition}
\newtheorem{definition}[theorem]{Definition}
\newtheorem{example}[theorem]{Example}

\newtheorem{question}[theorem]{Question}
\newtheorem{remark}[theorem]{Remark}
\input{tcilatex}
\newenvironment{thm}{\begin{theorem}}{\end{theorem}}
\newenvironment{cor}{\begin{corollary}}{\end{corollary}}

\newenvironment{pro}{\begin{proposition}}{\end{proposition}}
\newenvironment{lem}{\begin{lemma}}{\end{lemma}}
\newenvironment{df}{\begin{definition}}{\end{definition}}
\newenvironment{exa}{\begin{example}}{\end{example}}
\newenvironment{rem}{\begin{remark}}{\end{remark}}
\newcommand{\la}{\langle}
\newcommand{\ra}{\rangle}

\renewcommand{\iff}{\leftrightarrow}

\newcommand{\mc}{\mathcal}

\newcommand{\mf}{\mathfrak}

\renewcommand{\to}{\rightarrow}

\newcommand{\nil}{\varnothing}

\newcommand{\Le}{\preceq}

\DeclareMathOperator{\RT}{RT}
\DeclareMathOperator{\WKL}{WKL}
\DeclareMathOperator{\GS}{GS}
\DeclareMathOperator{\ACA}{ACA}
\DeclareMathOperator{\GSD}{GSD}
\DeclareMathOperator{\RCA}{RCA}
\DeclareMathOperator{\PCA}{\Pi^1_1-CA}
\DeclareMathOperator{\Eq}{Eq}
\newtoggle{draftmode}
\toggletrue{draftmode}

\author{
	Katie Brodhead \and 
	Mushfeq Khan \and
	Bj\o rn Kjos-Hanssen \and
	William A.\ Lampe \and
	Paul Kim Long V.\ Nguyen \and
	Richard A.\ Shore
}
\title{The Strength of the Gr\"atzer-Schmidt theorem}

\begin{document}
	\maketitle

	\begin{abstract}
		The Gr\"atzer-Schmidt theorem of lattice theory states that each algebraic lattice is isomorphic to the congruence lattice of an algebra.
		%A lattice is algebraic if it is complete and generated by its compact elements.
		We study the reverse mathematics of this theorem. We also show that 
		\begin{enumerate}
			\item the set of indices of computable lattices that are complete is $\Pi^1_1$-complete; 
			\item the set of indices of computable lattices that are algebraic is $\Pi^1_1$-complete;
			\item the set of compact elements of a computable lattice is $\Pi^{1}_{1}$ and can be $\Pi^1_1$-complete; and
			\item the set of compact elements of a distributive computable lattice is $\Pi^{0}_{3}$, and there is an algebraic distributive computable lattice such that the set of its compact elements is $\Pi^0_3$-complete.
		\end{enumerate}
		\noindent\textbf{Keywords:} lattice theory, computability theory.
	\end{abstract}
	\tableofcontents

	\section{Introduction}

		The Gr\"atzer-Schmidt theorem \cite{MR0151406}, also known as the \emph{congruence lattice representation theorem}, states that each algebraic lattice is isomorphic to the congruence lattice of an algebra.
		It established a strong link between lattice theory and universal algebra. In this article we analyze the theorem from the point of view of reverse mathematics and calibrate the strength of the special case of the theorem for distributive lattices. The question of the strength of the general case of the theorem remains open. 

		We use notation associated with partial computable functions, $\varphi_e$, $\varphi_{e,s}$, $\varphi_{e,s}^\sigma$, $\varphi_e^f$ as in Odifreddi \cite{MR982269}.
		A $\Pi^1_1$ subset of $\omega$ may be written in the form (see, for example, Sacks \cite{MR1080970}, page 5)
		\[
			C_e=\{n\in\omega \mid \forall f\in\omega^\omega\,\,\varphi_e^f(n)\downarrow\}.
		\]
		A subset $A\subseteq\omega$ is \emph{$\Pi^1_1$-hard} if each $\Pi^1_1$ set is $m$-reducible to $A$; that is,
		for each $e$, there is a computable function $f$ such that for all $n$, $n\in C_e$ iff $f(n)\in A$.
		$A$ is \emph{$\Pi^1_1$-complete} if it is both $\Pi^1_1$ and $\Pi^1_1$-hard. It is well known that such sets exist. Fix for the rest of the paper a number $e_0$ so that $C_{e_0}$ is $\Pi^1_1$-complete.
		With each $n$, the set $C_{e_0}$ associates a tree $T'_n$ defined by
		\[
			T'_n=\{\sigma\in\omega^{<\omega}\mid \varphi_{e_0,|\sigma|}^\sigma(n)\uparrow\}.
		\]
		Note that $T'_n$ has no infinite path iff $n\in C_e$. 

		A \emph{computable lattice} $(L,\Le)$ has underlying set $L=\omega$ and a computable lattice ordering $\Le$ that is formally a subset of $\omega^2$. 

		We will use the symbol $\Le$ for lattice orderings, and reserve the symbol $\le$ for the natural ordering of the ordinals and in particular of $\omega$.
		Meets and joins corresponding to the order $\preceq$ are denoted by $\wedge$ and $\vee$. Below we will seek to build computable lattices from the trees $T'_n$.
		Since for many $n$, $T'_n$ will be finite, and a computable lattice must be infinite according to our definition, we will work with the following modification of $T'_n$:
		\[
			T_n=T'_n\cup \{\la i\ra: i\in\omega\}\cup \{\nil\}
		\]
		where $\nil$ denotes the empty string and $\la i\ra$ is the string of length 1 whose only entry is $i$. This ensures that $T_n$ has the same infinite paths as $T'_n$, and each $T_n$ is infinite.
		Moreover the sequence $\{T_n\}_{n\in\omega}$ is still uniformly computable.

	\section{Computability-theoretic analysis of lattice theoretic concepts}
		\subsection{Index set of complete lattices is \texorpdfstring{$\Pi^{1}_{1}$}{Pi11}-complete}
			\begin{df}
				A lattice $(L,\Le)$ is \emph{complete} if for each subset $S\subseteq L$, both $\sup S$ and $\inf S$ exist.
			\end{df}
			\begin{exa}
				In set-theoretic notation, $(\omega+1,\le)$ is complete. Its sublattice $(\omega,\le)$ is not, since $\omega=\sup\omega\not\in\omega$.
			\end{exa}
			\begin{lem}\label{1}
				The set of indices of computable lattices that are complete is $\Pi^1_1$.
			\end{lem}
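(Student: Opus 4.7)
The plan is to translate the definition of completeness directly into a $\Pi^1_1$ formula with parameter $e$. First, I would verify that ``$e$ codes a computable lattice'' is arithmetic: $\varphi_e$ must be total on $\omega^2$ with values in $\{0,1\}$ (a $\Pi^0_2$ condition), and the resulting relation $\Le_e$ must be reflexive, antisymmetric, transitive, and admit pairwise meets and joins, each of which is at worst $\Pi^0_2$. Conjoining these with the completeness clause preserves whatever complexity the latter carries.

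Next, I would encode a subset $S\subseteq\omega$ by its characteristic function $f\in 2^\omega$, writing $S_f=\{n:f(n)=1\}$. Completeness then reads
$$(\forall f\in 2^\omega)\bigl[(\exists x\in\omega)(x=\sup S_f)\,\wedge\,(\exists y\in\omega)(y=\inf S_f)\bigr].$$
The inner statement ``$x=\sup S_f$'' expands to
$$(\forall n)(f(n)=1\to n\Le_e x)\,\wedge\,(\forall z)\bigl[(\forall m)(f(m)=1\to m\Le_e z)\to x\Le_e z\bigr],$$
and dually for $\inf$. Since $\Le_e$ is computable uniformly in $e$, this matrix is arithmetic in $f$ and $e$.

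Combining the two steps, the full condition consists of a single universal real quantifier over an arithmetic matrix, placing it in $\Pi^1_1$. I do not anticipate any substantive obstacle; the only care needed is the routine quantifier count, checking that the witnessing $x,y$ are natural numbers (so their quantifiers are $\exists^0$, not $\exists^1$) and that the whole formula can indeed be arranged as $\forall^1$ followed by an arithmetic formula. In particular, because the arithmetic ``is a computable lattice'' clause is absorbed under the same $\forall^1$, the set of indices of complete computable lattices lies in $\Pi^1_1$.
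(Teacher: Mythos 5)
Your proposal is correct and follows essentially the same route as the paper: express ``$\sup S$ exists'' and ``$\inf S$ exists'' arithmetically in the set (real) parameter, then note that completeness is a single universal set quantifier over an arithmetic matrix, which is $\Pi^1_1$; your extra observation that the arithmetic ``index of a computable lattice'' clause is absorbed without raising the complexity is a harmless elaboration of the same argument.
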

			\begin{proof}
				The statement that $\sup S$ exists is equivalent to a first order statement in the language of arithmetic with set variable $S$:
				\[
					\exists a [\forall b (b\in S\to b\Le a) \And \forall c ((\forall b(b\in S\to b\Le c)\to a\Le c)].
				\]
				The statement that $\inf S$ exists is similar, in fact dual. Thus the statement that $L$ is complete consists of a universal set quantifier over $S$, followed by an arithmetical matrix.
			\end{proof}

			\begin{pro}\label{completeHard}
				The set of indices of computable lattices that are complete is $\Pi^1_1$-hard.
			\end{pro}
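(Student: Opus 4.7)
The plan is to reduce the $\Pi^1_1$-complete set $C_{e_0}$ to the set of indices of complete computable lattices. Given $n$, I will construct, uniformly and computably in $n$, a lattice $L_n$ with the property that $L_n$ is complete iff $T_n$ has no infinite path (i.e., iff $n\in C_{e_0}$). Since $C_{e_0}$ is $\Pi^1_1$-complete, the map $n\mapsto$ (index of $L_n$) witnesses the required $m$-reduction.

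The construction takes $T_n$ as a backbone poset (under prefix or reverse-prefix order) and adjoins a bottom $\bot$, a top $\top$, and a pair of auxiliary elements $a,b$ that are incomparable with one another, both below $\top$, and arranged so as to serve as potential minimal upper bounds of infinite chains arising from paths of $T_n$. I will further need level-indexed cap elements $u_k$ (for $k\in\omega$) so that finite subsets acquire unique joins before they are forced to reach the incomparable pair $\{a,b\}$, in order that $L_n$ be a lattice at all. The design principle is: upper bounds of an infinite prefix-chain in $T_n$ should reduce to precisely the antichain $\{a,b\}$ with $\top$ above, thus exhibiting two minimal incomparable upper bounds and blocking the existence of a supremum, whereas every other subset should acquire a unique least upper bound among $T_n\cup\{u_k\}_k\cup\{a,b,\top\}$.

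The key steps would be: (1) explicitly define the underlying set and order of $L_n$ and verify both are computable uniformly in $n$; (2) verify $L_n$ is a lattice by exhibiting binary meets and joins, using tree operations on $T_n$, the $u_k$ as caps, $\top$ as the join of $a$ and $b$, and $\bot$ (or an appropriate element of $T_n$) to absorb missing meets; (3) show that if $T_n$ has an infinite path $f$, the set $\{\sigma\in T_n:\sigma\sqsubset f\}$ has upper bounds exactly $\{a,b,\top\}$, so no supremum exists and $L_n$ is not complete; (4) conversely, show that if $T_n$ is well-founded, every subset of $L_n$ has a supremum and an infimum.

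The main obstacle is step (4). The auxiliary elements that cause sup-failure for path prefixes must not simultaneously cause sup-failure for other infinite subsets of $L_n$, since a well-founded tree may still contain infinite antichains of unbounded length. Thus the ``two minimal incomparable upper bounds'' phenomenon must be triggered exclusively by genuine infinite prefix-chains, not by any infinite set of unbounded length. This requires tuning the comparability between $\{a,b\}$ and $T_n$ (and between the caps $u_k$ and $\{a,b\}$) so that non-chain infinite subsets are always bounded above by some $u_k$ or by a single tree element, while an infinite chain escapes every $u_k$ and thereby sees only $\{a,b,\top\}$ as upper bounds. Implementing this discrimination cleanly, while keeping $L_n$ a computable lattice, is the technical heart of the argument.
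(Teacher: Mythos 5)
There is a genuine gap, and in fact the central design principle of your reduction cannot work as stated. You propose to block $\sup S$ for an infinite path $S$ by arranging that the upper bounds of $S$ are exactly the antichain $\{a,b\}$ together with $\top$, ``exhibiting two minimal incomparable upper bounds.'' But in any lattice the set of upper bounds of $S$ is closed under binary meets: if $a$ and $b$ both bound $S$, then every $x\in S$ satisfies $x\Le a$ and $x\Le b$, hence $x\Le a\wedge b$, so $a\wedge b$ is again an upper bound lying below both $a$ and $b$. Thus two distinct minimal upper bounds are impossible; in a lattice, $\sup S$ fails to exist only when the upper bounds of $S$ contain no minimal element at all, which forces an infinite strictly descending chain of upper bounds. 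Consequently no finite gadget ($\bot$, $\top$, $a$, $b$, plus caps $u_k$) glued onto $T_n$ can produce the required sup-failure; any correct construction must place an infinite descending structure above each infinite path. A second, related problem is the one you yourself flag and leave unresolved: with level-indexed caps $u_k$, an infinite antichain of strings of unbounded length in a well-founded $T_n$ escapes every $u_k$ just as a path does, so your lattice would fail to be complete even when $n\in C_{e_0}$, destroying the reduction; and if $a,b$ both lie above all of $T_n$ (which seems needed to bound a non-computable path computably), then $a\wedge b$ already fails to exist. Since steps (2)--(4) are never actually carried out, the proposal does not constitute a proof.

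For comparison, the paper's construction resolves exactly these difficulties by taking the ``blocking structure'' to be a mirror copy of the tree itself: $L_n$ consists of $T_n$ with the prefix order and a copy $T_n^*$ with the reversed order, glued by $\sigma\prec\sigma^*$ and closing under transitivity. One then checks that $\alpha^*\succeq\sigma$ iff $\alpha$ is comparable with $\sigma$, so the upper bounds in $T_n^*$ of a set $S\subseteq T_n$ are precisely the starred elements comparable with all of $S$. For an infinite path this is an infinite descending chain with no least element (no supremum), while any infinite $S$ in a well-founded $T_n$ contains two incomparable elements $\sigma,\tau$, whose upper bounds form the finite chain above $(\sigma\wedge\tau)^*$, so $\sup S$ exists; infima come for free from self-duality ($\sigma\mapsto\sigma^*$). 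This is the kind of path-indexed, infinitary obstruction your fixed pair $\{a,b\}$ was meant to simulate but cannot.
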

			\begin{proof}
				Let $L_n$ consist of two disjoint copies of $T_n$, called $T_n$ and $T_n^*$. For each $\sigma\in T_n$, its copy in $T_n^*$ is called $\sigma^*$. Order $L_n$ so that $T_n$ has the prefix ordering 
				\[
					\sigma\Le \sigma^\frown\tau,
				\]
				$T_n^*$ has the reverse prefix ordering, and $\sigma\prec\sigma^*$ for each $\sigma\in T_n$. We take the transitive closure of these axioms to obtain the order of $L_n$; see Figure \ref{fig:Ln}. 

				Next, we verify that $L_n$ is a lattice. For any $\sigma$, $\tau\in T_n$ we must show the existence of
				(1) $\sigma\vee\tau$, (2) $\sigma\wedge\tau$, (3) $\sigma\vee\tau^*$, and (4) $\sigma\wedge\tau^*$; the existence of $\sigma^*\vee\tau^*$ and $\sigma^*\wedge\tau^*$ then follows by duality. 

				We claim that for any strings $\alpha$, $\sigma\in T_n$, we have $\alpha^*\succeq\sigma$ iff $\alpha$ is comparable with $\sigma$; see Figure \ref{fig:Ln}. 
				In one direction, if $\alpha\succeq\sigma$ then $\alpha^*\succeq\alpha\succeq\sigma$, and if $\sigma\succeq\alpha$ then $\alpha^*\succeq\sigma^*\succeq\sigma$.
				In the other direction, if $\alpha^*\succeq\sigma$ then by the definition of $\Le$ as a transitive closure there must exist $\rho$ with $\alpha^*\succeq\rho^*\succeq\rho\succeq\sigma$.
				Then $\alpha\Le\rho$ and $\sigma\Le\rho$, which implies that $\alpha$ and $\rho$ are comparable.

				Using the claim we get that (1) $\sigma\vee\tau$ is $(\sigma\wedge\tau)^*$, where (2) $\sigma\wedge\tau$ is simply the maximal common prefix of $\sigma$ and $\tau$;
				(3) $\sigma\vee\tau^*$ is $\sigma^*\vee\tau^*$ which is $(\sigma\wedge\tau)^*$; and (4) $\sigma\wedge\tau^*$ is $\sigma\wedge\tau$. 

				It remains to show that $(L_n,\preceq)$ is complete iff $T_n$ has no infinite path. So suppose $T_n$ has an infinite path $S$.
				Then $\sup S$ does not exist, because
					$S$ has no greatest element,
					$S^*$ has no least element,
					each element of $S^*$ is an upper bound of $S$, and
					there is no element above all of $S$ and below all of $S^*$. 

				Conversely, suppose $T_n$ has no infinite path and let $S\subseteq L_n$. If $S$ is finite then $\sup S$ exists.
				If $S$ is infinite then since $T_n$ has no infinite path, there is no infinite linearly ordered subset of $L_n$, and so $S$ contains two incomparable elements $\sigma$ and $\tau$.
				Because $T_n$ is a tree, $\sigma\vee\tau$ is in $T_n^*$.
				Now the set of all elements of $L_n$ that are above $\sigma\vee\tau$ is finite and linearly ordered, and contains all upper bounds of $S$.
				Thus $S$ has a supremum. Since $L_n$ is self-dual, i.e. $(L_n,\Le)$ is isomorphic to $(L_n,\succeq)$ via $\sigma\mapsto\sigma^*$, infs also always exist. So $L_n$ is complete.
			\end{proof}

			\begin{figure}[htb!]
				\centering%
				\includegraphics[height=7.5cm]{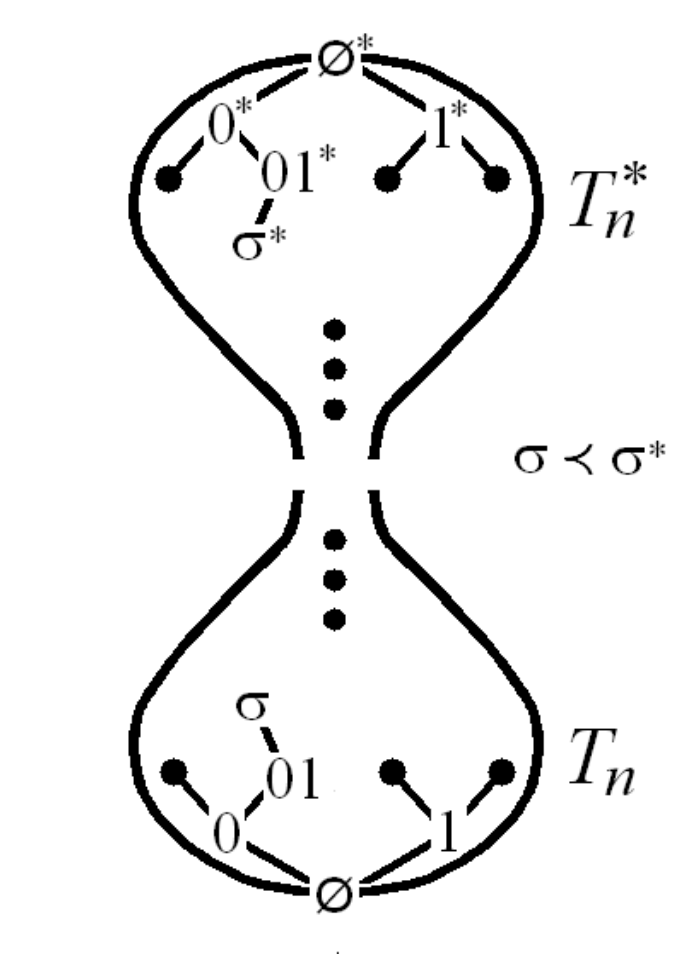}
				\caption{The lattice $L_n$ from Proposition \ref{completeHard}.}
				\label{fig:Ln}
			\end{figure}
		\subsection{Compact elements of a lattice can be \texorpdfstring{$\Pi^{1}_{1}$}{Pi11}-complete}

			\begin{df}
				An element $a\in L$ is \emph{compact} if for each subset $S\subseteq L$, if $a\Le \sup S$ then there is a finite subset $S'\subseteq S$ such that $a\Le \sup S'$.
				Thus, if $a\Le\sup S$ but for each finite subset $S'\subseteq S$, $a\not\Le \sup S'$, then $S$ is a \emph{witness} for the non-compactness of $a$.
			\end{df}

			\begin{lem}
				In each computable lattice $L$, the set of compact elements of $L$ is $\Pi^1_1$. 
			\end{lem}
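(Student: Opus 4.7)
My approach mirrors the proof of Lemma~\ref{1}: I will recast ``$a$ is compact'' as a formula consisting of a single universal set quantifier over $S\subseteq L$ followed by an arithmetical matrix in the parameters $\Le$, $a$, and $S$. Unfolded, $a$ is compact iff
\[
\forall S\subseteq L\;\bigl[(\sup S\text{ exists}\,\wedge\,a\Le\sup S)\ \longrightarrow\ \exists\,\text{finite }F\subseteq S\,(a\Le\sup F)\bigr],
\]
so it suffices to check that the bracketed implication is arithmetical in $S$ for fixed $a$.

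For the antecedent, I would use the arithmetical predicate ``$b=\sup S$'' saying that $b$ is an upper bound of $S$ and every upper bound of $S$ lies $\Le$-above $b$. Then ``$\sup S$ exists and $a\Le\sup S$'' becomes $\exists b\,[b=\sup S\,\wedge\,a\Le b]$, which is arithmetical in $S$ and $a$. For the consequent, I would code a finite $F\subseteq S$ by a tuple $\la s_1,\dots,s_k\ra\in\omega^{<\omega}$ with each $s_i\in S$; since $L$ is a lattice, the binary join is order-definable by the $\Pi^0_1$ condition $x=y\vee z\Iff y\Le x\,\wedge\,z\Le x\,\wedge\,\forall w(y\Le w\,\wedge\,z\Le w\to x\Le w)$, and iterating yields an arithmetical definition of ``$a\Le s_1\vee\cdots\vee s_k$.'' Hence ``$\exists$ finite $F\subseteq S$ with $a\Le\sup F$'' is arithmetical in $S$ and $a$ as well.

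Combining, the bracketed matrix is arithmetical in $S$, and prefixing with the single set quantifier $\forall S\subseteq L$ yields a $\Pi^1_1$ definition of ``$a$ is compact,'' uniform in $a$. Thus the set of compact elements of $L$ is $\Pi^1_1$. The only delicate point is that finite joins are not given as part of the data, only the order $\Le$; this is handled by the order-definability of $\vee$ noted above, so no further obstacle arises.
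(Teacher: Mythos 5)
Your proof is correct and takes essentially the same approach as the paper, which simply observes (as in Lemma~\ref{1}) that ``$a$ is compact'' is a universal set quantifier over $S$ followed by an arithmetical matrix. You merely spell out the details the paper leaves implicit (the arithmetical definition of $\sup$, coding finite subsets by tuples, and order-definability of finite joins), all of which are fine.
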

			\begin{proof}
				Similarly to the situation in Lemma \ref{1}, the statement that $a$ is compact consist of a universal set quantifier over $S$ followed by an arithmetical matrix.
			\end{proof}

			\begin{exa}\label{compactExa}
				Let $L[a]=\omega+1\cup\{a\}$ be ordered by $0\prec a\prec\omega$, and let the element $a$ be incomparable with the positive numbers.
				Then $a$ is not compact, because $a\Le\sup\omega$ but $a\not\Le\sup S'$ for any finite $S'\subseteq \omega$.
			\end{exa}

			\begin{figure}[htb!]
				\centering%
				\includegraphics[height=7.5cm]{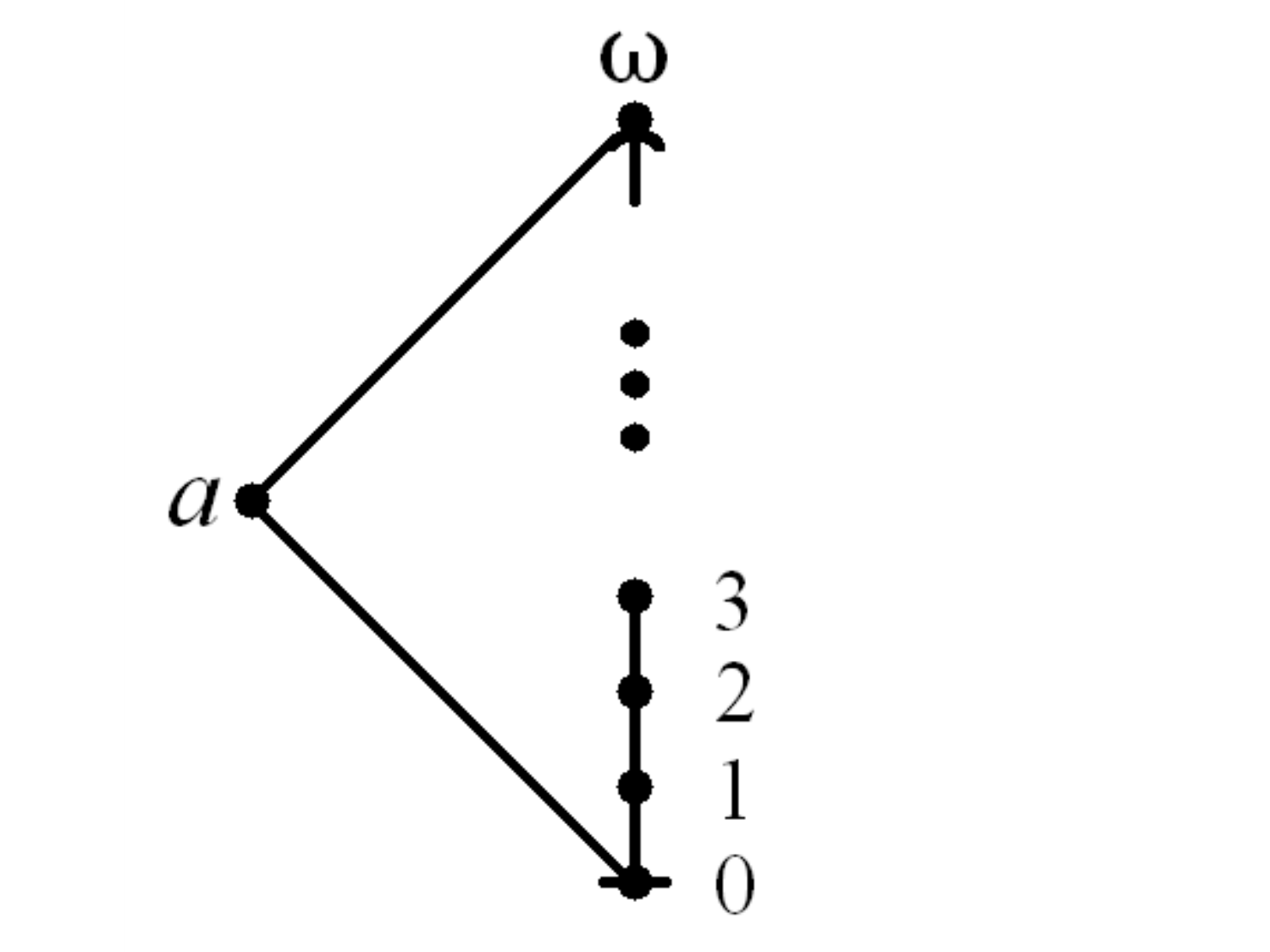}
				\caption{The lattice $L[a]$ from Example \ref{compactExa}.}
				\label{fig:La}
			\end{figure}

			\begin{df}
				A lattice $\,\,(L,\Le)\,\,$ is \emph{compactly generated} if every element is the supremum of a set of compact elements.
			 	A lattice is \emph{algebraic} if it is complete and compactly generated.
			\end{df}
			\begin{pro}\label{compactHard}
				There is a computable complete lattice $L$ such that the set of compact elements of $L$ is $\Pi^1_1$-hard.
				Moreover, $L$ is not algebraic.
			\end{pro}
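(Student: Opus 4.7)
My plan is to realize $L$ as the horizontal sum—identifying all bottoms to a single $\bot$ and all tops to a single $\top$—of a uniformly computable family of gadget lattices $L_n$ ($n\in\omega$), each algebraic and containing a distinguished element $a_n$ that is compact in $L_n$ iff $T_n$ has no infinite path. Horizontal sums of algebraic lattices are algebraic, and compactness of a non-extremal element of a summand is preserved in the sum: any $S\subseteq L$ with $\sup S\succeq a_n$ either contains elements of two different blocks (in which case a two-element cross-block subset already joins to $\top\succeq a_n$) or lies inside $L_n\cup\{\bot,\top\}$ (in which case a finite witness comes from compactness of $a_n$ in $L_n$). The map $n\mapsto a_n$ is therefore a computable many-one reduction from $C_{e_0}$ to the compact elements of $L$, giving $\Pi^1_1$-hardness.

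The gadget $L_n$ is built in the spirit of Example~\ref{compactExa}, with the chain $\omega$ there replaced by the tree $T_n$. Concretely, I place the nodes of $T_n$ inside $L_n$ using the prefix order (with $\nil$ identified with the bottom), add a top $1_n$ above the tree, and introduce an element $a_n$ with $\bot\prec a_n\prec 1_n$ incomparable with every non-root node $u_\sigma$; the join of $a_n$ with any $u_\sigma$, and the join of any two incomparable $u_\sigma,u_\tau$, both equal $1_n$. An infinite path $\sigma_0\prec\sigma_1\prec\cdots$ in $T_n$ then produces a set $S=\{u_{\sigma_i}\}$ with $\sup S=1_n\succeq a_n$, while every finite subset of $S$ has supremum a single $u_{\sigma_k}$ inside $T_n$, which is incomparable with $a_n$; so $a_n$ is non-compact. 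Conversely, if $T_n$ is well-founded then any $S$ with $\sup S\succeq a_n$ must contain $a_n$ or $1_n$ directly, or contain two incomparable non-root nodes (whose join is already $1_n$), each case giving a finite witness; so $a_n$ is compact.

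The main obstacle is that the naive $L_n$ sketched above is not algebraic: when $T_n$ has an infinite path $p$, a non-root node $u_\sigma$ off $p$ inherits non-compactness by the same argument used for $a_n$, and the compact elements below $u_\sigma$ need not join back to $u_\sigma$, so compact generation fails. I sidestep this by re-implementing $L_n$ as the ideal lattice $I(P_n)$ of a carefully chosen computable poset $P_n$ built from $T_n$. The lattice of downward-closed subsets of any poset is automatically algebraic, with the finitely generated ideals as compact elements and every ideal the directed union of its finitely generated sub-ideals, so algebraicity is free; the design problem becomes combinatorial—arrange $P_n$ so that a specific ideal playing the role of $a_n$ is finitely generated precisely when $T_n$ has no infinite path. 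Morally this ideal encodes the ill-founded core of $T_n$, requiring infinitely many generators exactly when an infinite path exists. Horizontally summing the resulting $L_n$ then yields the desired computable algebraic lattice with $\Pi^1_1$-hard set of compact elements.
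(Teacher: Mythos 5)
Your overall plan --- one block per tree $T_n$, glued at a common bottom and top, with a distinguished element $a_n$ in block $n$ and the computable reduction $n\mapsto a_n$ --- is exactly the paper's plan, and your case analysis for ``$a_n$ compact iff $T_n$ has no infinite path'' matches the paper's (the paper's gadget differs only in that $a_n$ is placed \emph{above} the copy of $T_n$, so that two incomparable nodes of $T_n$ join to $a_n$ and $a_n=\sup T_n$, rather than being incomparable with the tree). Your instinct that compact generation is the delicate point is also sound; indeed the paper's own proof never verifies algebraicity, and in its lattice too a node off an infinite path of $T_n$ lies below $a_n=\sup(\text{path})$ with no finite witness, hence is non-compact and is not the join of its compact predecessors. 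So the issue you flag is real and is not disposed of by the placement of $a_n$.

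The genuine gap is that your proposed repair is never carried out, and as described it cannot work verbatim. No poset $P_n$ is specified, and the appeal to ``the lattice of downward-closed subsets of any poset is automatically algebraic'' is unusable here: that lattice is uncountable, since $T_n$ contains the infinite antichain $\{\la i\ra : i\in\omega\}$ (so its down-set lattice has the size of the continuum), and even the directed-ideal completion adds one element per infinite path, of which an ill-founded $T_n$ may have continuum many. Hence $I(P_n)$ is not a lattice on $\omega$ at all, let alone a uniformly computable one; worse, \emph{which} ideals exist, and which of them are finitely generated, is precisely the $\Pi^1_1$ information about paths of $T_n$ that you are trying to encode, so a computable presentation cannot simply be read off from $P_n$. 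Designing a countable, uniformly computable, algebraic completion in which the designated element is compact exactly when $T_n$ is well-founded is the actual content of the proposition, and the sentence ``morally this ideal encodes the ill-founded core of $T_n$'' leaves that content unproved. As written, your argument establishes only the weaker statement that some computable lattice (not shown to be algebraic) has a $\Pi^1_1$-hard set of compact elements --- which your naive gadget, or the paper's, already gives.
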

			\begin{proof}
				Let $L$ consist of disjoint copies of the trees $T_n$, $n\in\omega$, each having the prefix ordering; least and greatest elements $0$ and $1$;
				and elements $a_n$, $n\in\omega$, such that $\sigma\prec a_n$ for each $\sigma\in T_n$, and $a_n$ is incomparable with any element not in $T_n\cup\{0,1\}$ (see Figure \ref{fig:L}).

				Suppose $T_n$ has an infinite path $S$. Then $a_n=\sup S$ but $a_n\not\Le \sup S'$ for any finite $S'\subseteq S$, since $\sup S'$ is rather an element of $S$. Thus $a_n$ is not compact. 

				Conversely, suppose $T_n$ has no infinite path, and $a_n\Le\sup S$ for some set $S\subseteq L$.
				If $S$ contains elements from $T_m\cup\{a_m\}$ for at least two distinct values of $m$, say $m_1\ne m_2$,
				then $\sup S=1=\sigma_1\vee\sigma_2$ for some $\sigma_i\in S\cap(T_{m_i}\cup \{a_{m_i}\})$, $i=1, 2$.
				So $a_n\Le\sup S'$ for some $S'\subseteq S$ of size two. If $S$ contains 1, there is nothing to prove.
				The remaining case is where $S$ is contained in $T_m\cup\{a_m,0\}$ for some $m$. Since $a_n\Le\sup S$, it must be that $m=n$.
				If $S$ is finite or contains $a_n$, there is nothing to prove.
				So suppose $S$ is infinite. Since $T_n$ has no infinite path, there must be two incomparable elements of $T_n$ in $S$.
				Their join is then $a_n$, since $T_n$ is a tree, and so $a_n\Le\sup S'$ for some $S'\subseteq S$ of size two. 

				Thus we have shown that $a_n$ is compact if and only if $T_n$ has no infinite path.
				There is a computable presentation of $L$ where $a_n$ is a computable function of $n$, for instance we could let $a_n=2n$.
				Thus letting $f(n)=2n$, we have that $T_n$ has no infinite path iff $f(n)$ is compact, i.e. $\{a\in L: a\text{ is compact}\}$ is $\Pi^1_1$-hard.

				It remains to show that $L$ is not algebraic.
				Fix $n$ such that $T_n$ has an infinite path $P$, and also some nontrivial finite paths that do not extend to infinite paths. Let $\sigma$ be on such a finite path. Then each element of $P$ is compact. However, $\sigma$ is below the supremum of $P$, but not below any join of finitely many elements of $P$, so $\sigma$ is not compact. Moreover, $\sigma$ is join irreducible, being located on the tree $T_n$. Thus $\sigma$ is not a join of compact elements below it, and so $L$ is not compactly generated.
			\end{proof}
			From the proof of Proposition \ref{compactHard} we obtain the following corollary.
			\begin{cor}[$\RCA_0$]
				The following principle is equivalent to $\PCA_0$:
				``For each countable lattice $L$, there is a set consisting of exactly the compact elements of $L$.''
			\end{cor}

			\begin{figure}[htb!]
				\centering%
				\includegraphics[height=6.5cm]{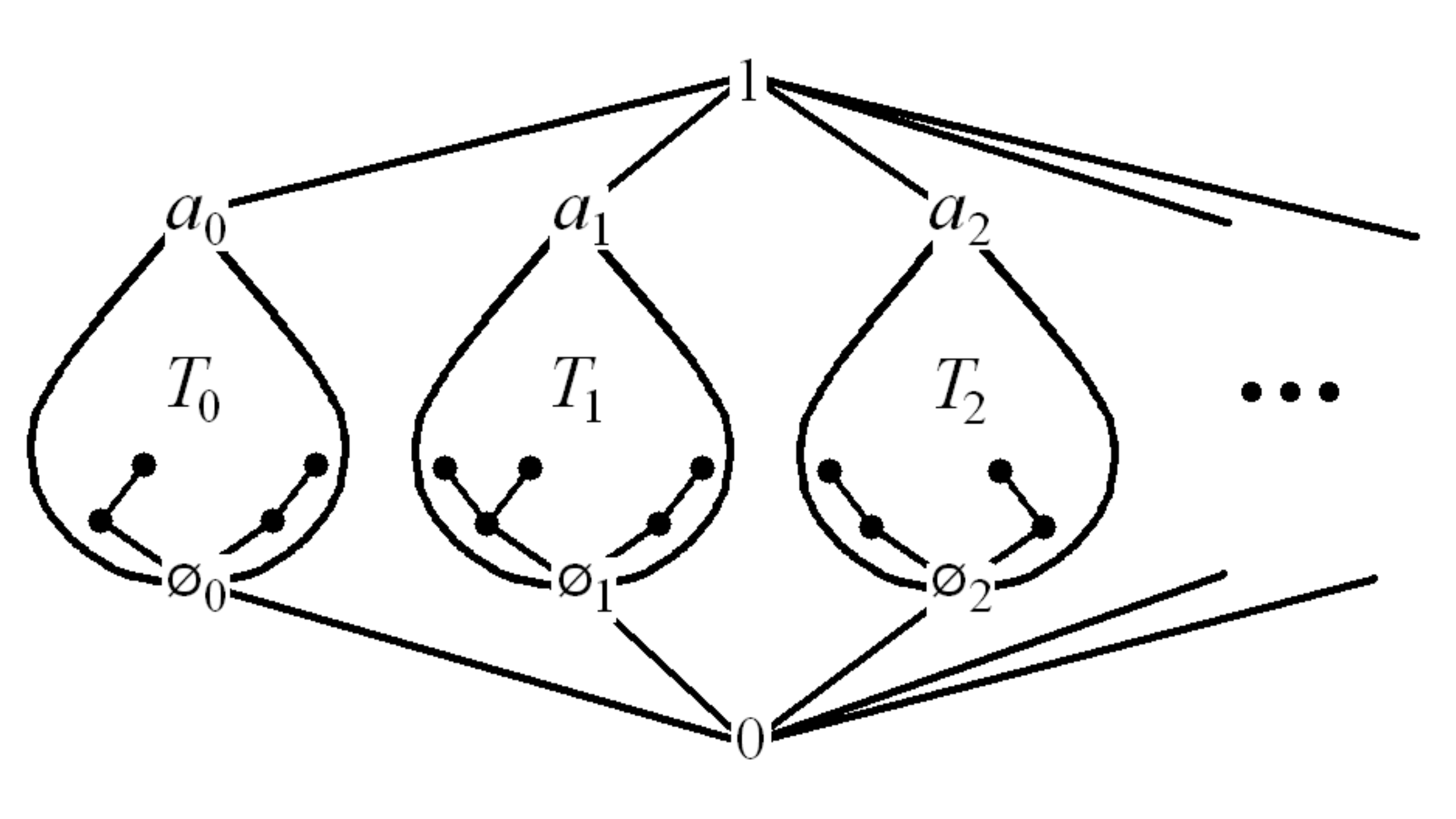}
				\caption{The lattice $L$ from Proposition \ref{compactHard}.}
				\label{fig:L}
			\end{figure}
			\begin{question}
				Is there a computable algebraic lattice such that the set of its compact elements is $\Pi^1_1$-complete?
			\end{question}
		\subsection{Index set of algebraic lattices is \texorpdfstring{$\Pi^{1}_{1}$}{Pi11}-complete}
			\begin{lem}
				The set of indices of computable lattices that are algebraic is $\Pi^1_1$.
			\end{lem}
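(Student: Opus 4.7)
The plan is to reduce the lemma to the complexity analyses already carried out: $L$ is algebraic iff $L$ is complete (Lemma \ref{1}) and $L$ is compactly generated. Since $\Pi^1_1$ is closed under conjunction, it suffices to show that the set of indices of computable compactly generated lattices is $\Pi^1_1$.

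To express compact generation, I would write: for each $a \in L$, $a$ equals the supremum of $\{c \in L : c \text{ is compact and } c \Le a\}$. The element $a$ is automatically an upper bound for this set, so the statement reduces to
$$\forall a\in L\,\forall b\in L\,\bigl[\bigl(\forall c\in L\,(c \text{ compact}\And c\Le a \to c\Le b)\bigr)\to a\Le b\bigr].$$

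Now I would track the quantifier complexity carefully. By the previous lemma, ``$c$ is compact'' is $\Pi^1_1$, so ``$(c\text{ compact} \And c\Le a)\to c\Le b$'' is of the form $\Pi^1_1\to\text{arithmetic}$, hence $\Sigma^1_1$. Using the fact that $\Sigma^1_1$ is closed under universal number quantification (Skolemize: $\forall c\,\exists X\,\phi(c,X)$ is equivalent to $\exists F\,\forall c\,\phi(c,F_c)$), the inner $\forall c\in L$ preserves $\Sigma^1_1$. The outer implication is then $\Sigma^1_1\to\text{arithmetic}$, which is $\Pi^1_1$, and the outer $\forall a,b\in L$ (ranging over $\omega$) preserves $\Pi^1_1$. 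Conjoining with the $\Pi^1_1$ statement that $L$ is complete yields that algebraicity is $\Pi^1_1$.

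There is no real obstacle; the only point requiring care is the closure of $\Sigma^1_1$ under $\forall^0$, which is what allows the innermost set quantifier hidden in ``$c$ is compact'' to be absorbed without increasing the analytical complexity past $\Pi^1_1$.
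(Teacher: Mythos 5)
Your proof is correct and follows essentially the same route as the paper: decompose algebraicity as completeness (Lemma \ref{1}) plus the first-order-over-$L$ statement that every upper bound of the compact predecessors of $a$ is above $a$, and then absorb the number quantifier standing in front of the hidden set quantifier via choice/Skolemization. The paper phrases this as $\Pi^1_1$ being closed under $\exists c$ (rewriting the matrix in the form $\exists c\,\forall S\,A(c,S)\Iff\forall (S_c)_{c\in\omega}\,\exists c\,A(c,S_c)$), which is just the dual formulation of your closure of $\Sigma^1_1$ under $\forall c$.
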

			\begin{proof}
				Let $L$ be a computable lattice and $C$ the set of its compact elements. $L$ is algebraic if it is complete (this property is $\Pi^1_1$ by Lemma \ref{1}) and \emph{each element is the supremum of its compact predecessors}, i.e.,
				any element that is above all the compact elements below $a$ is above $a$:
				\[
					\forall a(\forall b(\forall c(c\in C\And c\Le a\to c\Le b)\to a\Le b))
				\]
				Equivalently,
				\[
					\forall a(\forall b(\exists c(c\in C\And c\Le a\And c\not\Le b)\text{ or } a\Le b))
				\]
				This is equivalent to a $\Pi^1_1$ statement since, by the Axiom of Choice, any statement of the form
				$\exists c\,\,\forall S\,\, A(c,S)$ is equivalent to $\forall (S_c)_{c\in\omega}\,\, \exists c \,\,A(c,S_c)$.
			\end{proof}

			\begin{exa} 
				The lattice $(\omega+1,\le)$ is compactly generated, since the only noncompact element $\omega$ satisfies $\omega=\sup\omega$.
				The lattice $L[a]$ from Example \ref{compactExa} and Figure \ref{fig:La} is not compactly generated, as the noncompact element $a$ is not the supremum of $\{0\}$.
			\end{exa}

			\begin{pro}\label{algebraicHard}
				The set of indices of computable lattices that are algebraic is $\Pi^1_1$-hard.
			\end{pro}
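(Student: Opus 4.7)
The plan is to construct, uniformly in $n$, a computable lattice $M_n$ such that $M_n$ is algebraic if and only if $T_n$ has no infinite path. Since $\{n:T_n\text{ has no infinite path}\}=C_{e_0}$ is $\Pi^1_1$-complete, the map $n\mapsto$ (index of $M_n$) is the desired computable $m$-reduction witnessing $\Pi^1_1$-hardness.

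Take $M_n$ with underlying set $T_n\cup\{0,1,a_n,\omega_n\}$, these four new elements being distinct from the elements of $T_n$. Order $M_n$ so that $T_n$ carries its prefix ordering; $0$ is the bottom and $1$ is the top; $\omega_n$ sits just above every element of $T_n$, with $\sigma\prec\omega_n\prec 1$ for every $\sigma\in T_n$; and $a_n$ sits strictly between $0$ and $\omega_n$ but is incomparable with every element of $T_n$. A case check shows that $M_n$ is a lattice (meets in $T_n$ are longest common prefixes; joins of incomparable elements of $T_n$ are equal to $\omega_n$; the new elements interact with $T_n$ in the forced way) and that $M_n$ is complete, since every infinite chain or infinite antichain in $T_n$ fails to have an upper bound inside $T_n$ and therefore has supremum $\omega_n$ in $M_n$.

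For the forward direction, suppose $T_n$ has no infinite path. Then every infinite subset of $T_n$ contains a pair of incomparable elements, whose join in $M_n$ is $\omega_n$. A short case analysis then shows that every element of $M_n$ is compact: whenever $x\preceq\sup S$, either $\sup S$ is already witnessed by a single element of $S$, or $S$ contains two incomparable elements of $T_n$ whose join in $M_n$ is $\omega_n\succeq x$. So $M_n$ is complete and compactly generated, hence algebraic. Conversely, suppose $T_n$ has an infinite path $P$. Then $\sup P=\omega_n$, so $a_n\preceq\sup P$; but every finite $S'\subseteq P$ satisfies $\sup S'\in T_n$, which is incomparable with $a_n$, so $a_n\not\preceq\sup S'$. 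Thus $a_n$ is not compact. The only element strictly below $a_n$ in $M_n$ is $0$, so the compact predecessors of $a_n$ form the singleton $\{0\}$, whose supremum is $0\neq a_n$; hence $M_n$ is not compactly generated and therefore not algebraic.

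The main obstacle is verifying compactness of every element of $M_n$ in the no-infinite-path case. The key input is the K\"onig-style observation that any infinite subset of a tree with no infinite path must contain a pair of incomparable elements; since such a pair joins to $\omega_n$ in $M_n$, the compactness analysis collapses to a short case check over the possible values of $\sup S$.
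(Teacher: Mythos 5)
Your proposal is correct and is essentially the paper's own construction: the paper uses the lattice $T_n[a]$ ($T_n$ with the prefix order plus $0\prec a\prec 1$, $a$ incomparable to the tree), and your $M_n$ differs only by interposing the harmless extra element $\omega_n$ between $T_n$ and $1$; in both lattices completeness is automatic, and the special element ($a$ resp.\ $a_n$) fails to be compact exactly when $T_n$ has an infinite path, with $0$ as its only compact predecessor. The only presentational difference is that you verify the ``no infinite path $\Rightarrow$ algebraic'' direction by checking every element is compact, while the paper argues the contrapositive via an infinite chain; the underlying idea is the same.
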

			\begin{proof}
				Let the lattice $T_n[a]$ consist of $T_n$ with the prefix ordering, and additional elements $0\prec a\prec 1$ such that $a$ is incomparable with each $\sigma\in T_n$,
				and $0$ and $1$ are the least and greatest elements of the lattice.
				Note that $T_n[a]$ is always complete, since any infinite set has supremum equal to $1$. We claim that $T_n[a]$ is algebraic iff $T_n$ has no infinite path.

				Suppose $T_n$ has an infinite path $S$. Then $a\Le \sup S$, but $a\not\Le \sup S'$ for any finite $S'\subseteq S$.
				Thus $a$ is not compact, and so $a$ is not the sup of its compact predecessors ($0$ being its only compact predecessor), which means that $T_n[a]$ is not an algebraic lattice.

				Conversely, suppose $T_n[a]$ is not algebraic. Then some element of $T_n[a]$ is not the join of its compact predecessors.
				In particular, some element of $T_n[a]$ is not compact. So there exists a set $S\subseteq T_n[a]$ such that for all finite subsets $S'\subseteq S$, $\sup S'<\sup S$.
				In particular $S$ is infinite. Since each element except 1 has only finitely many predecessors, we have $\sup S=1$.
				Notice that $T_n[a] \backslash \{1\}$ is actually a tree, so if $S$ contains two incomparable elements then their join is already 1, contradicting the defining property of $S$.
				Thus $S$ is linearly ordered, and infinite, which implies that $T_n$ has an infinite path.
			\end{proof}

			\begin{figure}[htb!]
				\centering%
				\includegraphics[height=7.5cm]{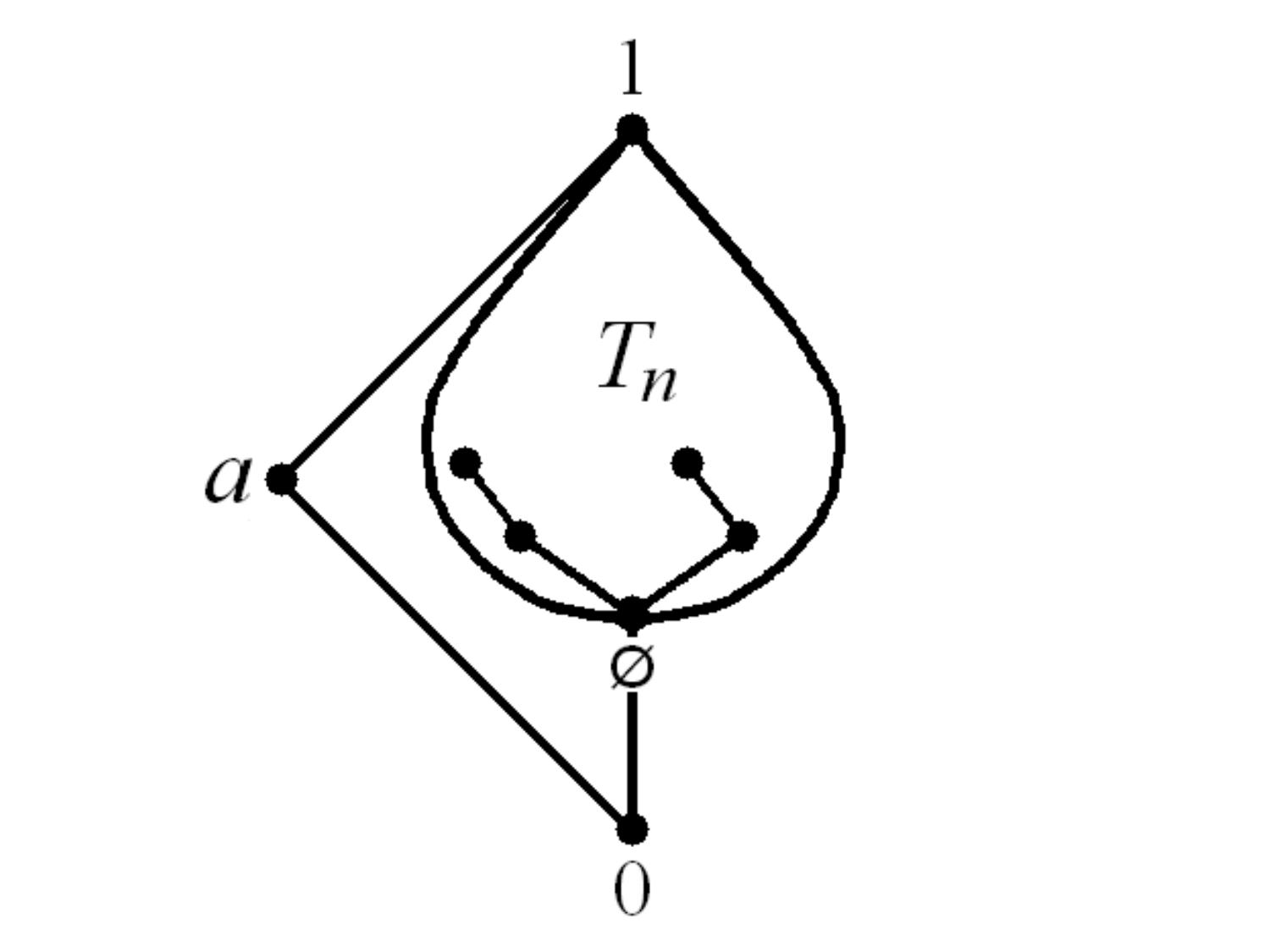}
				\caption{The lattice $T_n[a]$ from Proposition \ref{algebraicHard}.}
				\label{fig:Tna}
			\end{figure}

	\section{Lattices of equivalence relations}

		Let $\Eq(A)$ denote the set of all equivalence relations on $A$. Ordered by inclusion, $\Eq(A)$ is a complete lattice.
		In a sublattice $L\subseteq\Eq(A)$, we write $\sup_L$ for the supremum in $L$ when it exists, and $\sup$ for the supremum in $\Eq(A)$, and note that $\sup\le\sup_L$. 

		A \emph{complete sublattice of $\Eq(A)$} is a sublattice $L$ of $\Eq(A)$ such that $\sup_L=\sup$ and $\inf_L=\inf$.
		A sublattice of Eq$(A)$ that is a complete lattice is not necessarily a complete sublattice in this sense. The following lemma is well known.
		A good reference for lattice theory is the monograph of Gr\"atzer \cite{MR2451139}.

		\begin{lem}\label{referi}
			Suppose $A$ is a set and $(L,\subseteq)$ is a complete sublattice of $\Eq(A)$. Then an equivalence relation $E$ in $L$ is a compact member of $L$ if and only if $E$ is finitely generated in $L$.
		\end{lem}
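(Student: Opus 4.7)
The plan is to exploit the hypothesis that $L$ is a \emph{complete} sublattice: this makes $\sup_L$ coincide with $\sup$ in $\text{Eq}(A)$, so we can use the concrete description of joins in $\text{Eq}(A)$ as transitive closures of unions. For any pair $(a,b)\in A\times A$ lying in some element of $L$, let $E_{a,b}$ denote the smallest element of $L$ containing $(a,b)$; this exists as the meet in $L$ of the (nonempty) family of all members of $L$ that contain $(a,b)$, and equals the meet in $\text{Eq}(A)$ by the complete sublattice hypothesis. These ``principal'' elements will serve as the generators.

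For the forward direction, suppose $E\in L$ is compact. For each $(a,b)\in E$ the element $E_{a,b}$ is defined and satisfies $E_{a,b}\subseteq E$. On the other hand, the family $\{E_{a,b}:(a,b)\in E\}$ contains every pair of $E$, so its join in $\text{Eq}(A)$ — which equals its join in $L$ — contains $E$ as a relation, hence as an equivalence relation. Thus $E=\sup_L\{E_{a,b}:(a,b)\in E\}$, and by compactness finitely many $E_{a_1,b_1},\dots,E_{a_n,b_n}$ already join to $E$, so $E$ is finitely generated in $L$.

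For the converse, assume $E=\sup_L\{E_{a_1,b_1},\dots,E_{a_n,b_n}\}$ and that $E\subseteq\sup_L S$ for some $S\subseteq L$. Since $\sup_L S=\sup S$ is the transitive closure of $\bigcup S$, each pair $(a_i,b_i)\in E\subseteq\sup S$ is witnessed by a finite $\bigcup S$-chain, and hence lies in $\sup F_i$ for some finite $F_i\subseteq S$. Let $S'=F_1\cup\cdots\cup F_n$, which is finite. Then $\sup_L S'=\sup S'\in L$ contains each $(a_i,b_i)$, so by minimality $E_{a_i,b_i}\subseteq\sup_L S'$ for every $i$, and therefore $E\subseteq\sup_L S'$. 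This proves compactness of $E$. The main point to handle carefully is the switch between $\sup$ and $\sup_L$: without the complete sublattice hypothesis the transitive-closure description of joins would not be available inside $L$, and neither direction of the argument would go through.
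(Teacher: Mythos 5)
Your proof is correct and follows essentially the same route as the paper's: principal congruences $E_{a,b}$ (the paper's $C_{(a,b)}$) for the compact-implies-finitely-generated direction, and the transitive-closure description of joins together with $\sup_L=\sup$ for the converse. The only cosmetic difference is that the paper argues the first direction by contraposition, whereas you apply compactness directly to the cover $\{E_{a,b}:(a,b)\in E\}$.
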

		\begin{proof}
			One direction only uses that $L$ is a sublattice of Eq$(A)$ and $L$ is complete as a lattice.
			Suppose $E$ is not finitely generated in $L$. Let $C_{(a,b)}$ denote the infimum of all equivalence relations in $L$ that contain $(a,b)$.
			Then $E \subseteq\sup_L\{C_{(a,b)}: aEb\}$, but $E$ is not below any finite join of the relations $C_{(a,b)}$. So $E$ is not compact.

			Suppose $E$ is finitely generated in $L$. So there exists an $n$ and pairs $(a_1,b_1)$,$\ldots$, $(a_n,b_n)$ such that $a_iEb_i$ for all $1\le i\le n$, and
			for all equivalence relations $F$ in $L$, if $a_iFb_i$ for all $1\le i\le n$ then $E\subseteq F$.
			Suppose $E\subseteq \sup_L\{E_i: 1\le i<\infty\}$ for some $E_1,E_2,\ldots\in L$. Since $L$ is a complete sublattice of Eq$(A)$, $\sup_L=\sup$, so $E\subseteq \sup\{E_i: 1\le i<\infty\}$.
			Note that $\sup\{E_i: 1\le i<\infty\}$ is the equivalence relation generated by the relations $E_i$ under transitive closure.
			So there is some $j=j_n<\infty$ such that $\{(a_i,b_i):1\le i\le n\}\subseteq \bigcup_{i=1}^j E_i$ and hence $E\subseteq\bigcup_{i=1}^j E_i$. Thus $E$ is compact.
		 \end{proof}

		A \emph{computable complete sublattice of $\Eq(\omega)$} is a uniformly computable collection $\mc E=\{E_i\}_{i\in\omega}$ of distinct equivalence relations on $\omega$ such that
		$(\mc E,\subseteq)$ is a complete sublattice of $\Eq(\omega)$.
		We say that the lattice $L=(\omega,\Le)$ is \emph{computably isomorphic} to $(\mc E, \subseteq)$ if
		there is a computable function $\varphi:\omega\to\omega$ such that for all $i$, $j$, we have $i\Le j \iff E_{\varphi(i)}\subseteq E_{\varphi(j)}$.

		\begin{lem}\label{ccc}
			The indices of compact congruences in a computable complete sublattice of $\Eq(\omega)$ form a $\Sigma^0_2$ set.
		\end{lem}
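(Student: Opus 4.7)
The plan is to invoke Lemma \ref{referi} to replace the semantically unwieldy property ``compact'' with the combinatorially concrete property ``finitely generated in $\mc E$'', and then count quantifiers. Write $\mc E=\{E_k\}_{k\in\omega}$; since the hypothesis is that $\mc E$ is a computable complete sublattice of $\text{Eq}(\omega)$, Lemma \ref{referi} applies and gives: $E_i$ is compact in $\mc E$ iff there exist $n$ and pairs $(a_1,b_1),\ldots,(a_n,b_n)$ such that $a_j E_i b_j$ for each $j\le n$, and for every index $k$, if $a_j E_k b_j$ for all $j\le n$ then $E_i\subseteq E_k$.

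Next I would carry out the quantifier count. The outer block packs $n$ together with the tuple of pairs into a single existential quantifier over $\omega$. The conjunction $\bigwedge_{j\le n}a_j E_i b_j$ is bounded and uses the uniformly computable presentation of $\mc E$, so it is computable (uniformly in $i$ and the tuple). The inner ``$\forall k$'' clause is where care is needed: its hypothesis $\bigwedge_{j\le n}a_j E_k b_j$ is again bounded and computable (uniformly in $i$, $k$, and the tuple), while its conclusion $E_i\subseteq E_k$ unfolds as $\forall x\,\forall y\,((x,y)\in E_i\to (x,y)\in E_k)$, which is $\Pi^0_1$ uniformly in $i,k$. Hence the implication is $\Pi^0_1$, and prepending $\forall k$ keeps it $\Pi^0_1$. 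The whole characterization thus has the shape $\exists\,(\text{finite tuple})\,\,\Pi^0_1$, i.e., it is $\Sigma^0_2$ in $i$, as required.

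I do not expect any serious obstacle: the real content is supplied by Lemma \ref{referi}, which trades the a priori $\Pi^1_1$ description of compactness for a first-order one. The only point that needs a brief verification is the legitimacy of replacing the universal quantifier ``for all $F\in L$'' in Lemma \ref{referi} by a universal over natural-number indices $k$, which is exactly what the definition of a \emph{computable} complete sublattice --- a uniformly computable enumeration of the members of $\mc E$ --- supplies.
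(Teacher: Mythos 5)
Your proposal is correct and is essentially the paper's own proof: apply Lemma \ref{referi} to replace compactness with finite generation, then observe that the resulting formula is an existential over a finite tuple followed by a $\forall k$ clause whose matrix is a computable hypothesis implying the $\Pi^0_1$ condition $E_i\subseteq E_k$, hence $\Sigma^0_2$. Your remark about replacing quantification over members of $\mc E$ by quantification over indices, justified by the uniformly computable presentation, is exactly the (implicit) step in the paper's argument.
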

		\begin{proof}
			Suppose the complete sublattice is $\mc E=\{E_i\}_{i\in\omega}$. By Lemma \ref{referi}, $E_k$ is compact if and only if it is finitely generated, i.e., 
			\[
				\exists n\,\, \exists a_1,\ldots,a_n \,\,\exists b_1,\ldots,b_n\,\,\left[ \bigwedge_{i=1}^n a_iE_k b_i\And \forall j \left(\bigwedge_{i=1}^n a_iE_j b_i \to E_k\subseteq E_j\right)\right].
			\]
			Here $E_k\subseteq E_j$ is $\Pi^0_1$: $\forall x\forall y\,\, (xE_k y\to xE_j y)$, so the formula is $\Sigma^0_2$.
		\end{proof}

		\subsection{Congruence lattices}
			An \emph{algebra} $\mf A$ consists of a set $A$ and functions $f_i:A^{n_i}\to A$. Here $i$ is taken from an index set $I$ which may be finite or infinite, and $n_i$ is the arity of $f_i$.
			Thus, an algebra is a purely functional model-theoretic structure.
			A \emph{congruence relation} of $\mf A$ is an equivalence relation on $A$ such that for each unary $f_i$ and all $x, y\in A$, if $xEy$ then $f_i(x)Ef_i(y)$,
			and the natural similar property holds for $f_i$ of arity greater than one. 

			The congruence relations of $\mf A$ form a lattice under the inclusion (refinement) ordering. This lattice $\text{Con}(\mf A)$ is called the \emph{congruence lattice} of $\mf A$.

			The following lemma is well-known and straight-forward.
			\begin{lem}\label{straight}
				If $\mf A$ is an algebra on $A$, then $\text{Con}(\mf A)$ is a complete sublattice of $\Eq(A)$.
			\end{lem}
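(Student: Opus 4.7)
The plan is to verify the two closure conditions separately: closure of $\text{Con}(\mf A)$ under arbitrary intersections (which handles infima) and closure under the sup operation computed in $\text{Eq}(A)$ (which handles suprema). Since $\text{Eq}(A)$ is complete, this will show both that $\text{Con}(\mf A)$ is a complete lattice and that the inclusion $\text{Con}(\mf A)\hookrightarrow \text{Eq}(A)$ preserves arbitrary sups and infs, which is exactly the definition of a complete sublattice given earlier.

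For infima, given a family $\{E_j\}_{j\in J}\subseteq \text{Con}(\mf A)$, I would take $E=\bigcap_j E_j$ and check directly that $E$ is a congruence: if $x_1\mathrel{E}y_1,\ldots,x_n\mathrel{E}y_n$ then $x_i\mathrel{E_j}y_i$ for every $j$, so $f_i(x_1,\ldots,x_n)\mathrel{E_j}f_i(y_1,\ldots,y_n)$ for every $j$, and intersecting gives the desired relation in $E$. This is the inf in $\text{Eq}(A)$ as well, so the inf condition is clear.

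For suprema, I would use the standard description of $\sup\{E_j\}_{j\in J}$ in $\text{Eq}(A)$ as the transitive closure of $\bigcup_j E_j$: namely $x\mathrel{E}y$ iff there is a finite chain $x=z_0,z_1,\ldots,z_m=y$ with each consecutive pair related by some $E_{j_k}$. The main (and only nontrivial) step is to verify that this transitive closure is still a congruence. For a unary $f_i$ the argument is immediate: apply $f_i$ along the chain, using that each $E_{j_k}$ is a congruence, then use transitivity. For an $n$-ary operation $f_i$, given $x_1\mathrel{E}y_1,\ldots,x_n\mathrel{E}y_n$, I would chain coordinate by coordinate: first replace $x_1$ by $y_1$ through the first chain, keeping the remaining arguments fixed, so that the values of $f_i$ at consecutive stages are related by a single $E_{j_k}$; then do the same for coordinate $2$, etc. Concatenating these finitely many chains gives a single chain witnessing $f_i(x_1,\ldots,x_n)\mathrel{E}f_i(y_1,\ldots,y_n)$.

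I do not expect a serious obstacle here; the only thing one has to be careful about is that sup in $\text{Eq}(A)$ really is transitive closure of the union (so that sup in the sublattice and sup in $\text{Eq}(A)$ coincide, not merely that the sublattice has its own sup), and that the ``chain by chain'' replacement argument is carried out in the correct order so that each single step lies in one of the $E_j$'s.
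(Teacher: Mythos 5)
Your proof is correct. The paper states this lemma without proof (calling it ``well-known and straight-forward''), and your argument is exactly the standard one it has in mind: closure of $\text{Con}(\mf A)$ under arbitrary intersections gives the infima, and the verification that the transitive closure of $\bigcup_j E_j$ is again a congruence---via the coordinate-by-coordinate chain replacement, which works because each $E_{j_k}$ is reflexive---shows that the supremum computed in $\text{Eq}(A)$ already lies in $\text{Con}(\mf A)$, so $\sup_L=\sup$ and $\inf_L=\inf$ as the paper's definition of complete sublattice requires.
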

			\begin{thm}[Gr\"atzer-Schmidt \cite{MR0151406}]\label{GS}
				Each algebraic lattice is isomorphic to the congruence lattice of an algebra. 
			\end{thm}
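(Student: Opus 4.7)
The plan is to follow the standard two-step reduction: first, reduce the algebraic-lattice statement to a semilattice statement via ideal completion; second, realize any given $\{\vee,0\}$-semilattice as the semilattice of compact congruences of an algebra.

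For the first step, recall that in an algebraic lattice $L$ the set $K=K(L)$ of compact elements is closed under finite joins and contains the least element, hence is a $\{\vee,0\}$-subsemilattice of $L$. The map $a\mapsto\{c\in K:c\Le a\}$ is a lattice isomorphism from $L$ onto the lattice of ideals $\text{Id}(K)$; this is a routine consequence of compact generation. Dually, for any algebra $\mf A$ the congruence lattice $\text{Con}(\mf A)$ is algebraic, its compact elements are precisely the finitely generated congruences (an instance of Lemma \ref{referi}), and so $\text{Con}(\mf A)\cong\text{Id}(K(\text{Con}(\mf A)))$. It thus suffices to prove the semilattice version: for every $\{\vee,0\}$-semilattice $S$, construct an algebra $\mf A$ whose semilattice of compact congruences is isomorphic to $S$.

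For the second step, I would build $\mf A$ directly. Choose, for each $s\in S\setminus\{0\}$, a distinguished pair of points $(p_s,q_s)$ in the underlying set $A$, intended to generate the principal congruence corresponding to $s$. For each relation $s\Le t$ in $S$, adjoin a unary operation sending $(p_t,q_t)$ onto $(p_s,q_s)$, thereby forcing $\Theta(p_s,q_s)\Le\Theta(p_t,q_t)$; for each join $s\vee t=u$, adjoin operations forcing $\Theta(p_u,q_u)\Le\Theta(p_s,q_s)\vee\Theta(p_t,q_t)$. On all remaining arguments, operations are defined so as to add nothing further, concretely by passing to the free algebra of the resulting partial algebra. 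One then verifies (a) that $s\mapsto\Theta(p_s,q_s)$ is a $\{\vee,0\}$-homomorphism from $S$ into $K(\text{Con}(\mf A))$, and (b) that every finitely generated congruence of $\mf A$ has the form $\Theta(p_{s_1},q_{s_1})\vee\cdots\vee\Theta(p_{s_n},q_{s_n})=\Theta(p_{s_1\vee\cdots\vee s_n},q_{s_1\vee\cdots\vee s_n})$, giving surjectivity.

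The main obstacle is injectivity: one must guarantee that $s\ne t$ in $S$ implies $\Theta(p_s,q_s)\ne\Theta(p_t,q_t)$, i.e., that the construction does not identify more pairs than the semilattice structure of $S$ demands. Preventing such collapse is the substantive content of the Gr\"atzer-Schmidt argument. The standard route is a two-stage construction: first define a partial algebra that encodes exactly the required identifications, then take its free completion; the universal property then forces the congruence generated by each $(p_s,q_s)$ to be the minimal congruence consistent with the imposed constraints, so that no spurious identifications occur, and combining with the isomorphism from the first step yields $\text{Con}(\mf A)\cong\text{Id}(S)\cong L$.
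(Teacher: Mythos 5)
First, a point of comparison: the paper does not prove this statement. Theorem \ref{GS} is quoted from Gr\"atzer and Schmidt's 1963 paper and used as a black box (the paper's contribution is to show the theorem \emph{fails} effectively), so there is no in-paper proof to measure your attempt against; it must stand on its own. Your first step is correct and standard: in an algebraic lattice the compact elements form a $\{\vee,0\}$-semilattice $K(L)$ with $L\cong\text{Id}(K(L))$, and dually $\text{Con}(\mf A)$ is always algebraic with compact congruences exactly the finitely generated ones, so the theorem reduces to representing an arbitrary $\{\vee,0\}$-semilattice as the semilattice of compact congruences of an algebra. This is precisely the reduction the paper alludes to in its closing remark and in Conjecture \ref{con1}.

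The gap is in the second step, which is where the entire content of the theorem lives, and your sketch defers it rather than supplying it. Concretely: (i) forcing $\Theta(p_u,q_u)\Le\Theta(p_s,q_s)\vee\Theta(p_t,q_t)$ for $u=s\vee t$ cannot be achieved by a unary operation carrying one pair onto another, because the join of two congruences is generated by alternating transitive chains; one must adjoin auxiliary elements $p_u=x_0,x_1,\ldots,x_k=q_u$ with operations tying consecutive pairs to $(p_s,q_s)$ and $(p_t,q_t)$, and then control the congruences these new elements themselves generate. (ii) More seriously, the assertion that the universal property of the free completion of a partial algebra ``forces the congruence generated by each $(p_s,q_s)$ to be the minimal congruence consistent with the imposed constraints'' is not a formal consequence of any universal property; free completions can and in general do create new identifications. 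The substance of every known proof (Gr\"atzer--Schmidt's original transfinite construction, or Pudl\'ak's direct-limit argument \cite{Pudlak76}) is an extension lemma asserting that congruences of the partial algebra extend to the completion without collapsing, and that principal congruences of the completion restrict back to the intended ones. You correctly identify injectivity of $s\mapsto\Theta(p_s,q_s)$ as the crux, but the proposal does not state, let alone prove, the lemma that delivers it; as written it is an accurate roadmap of the standard argument rather than a proof.
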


			\begin{rem}
				Let $A$ be a set, and let $L$ be a complete sublattice of $\Eq(A)$.
				Then $L$ is algebraic \cite{MR2451139}, and so by Theorem \ref{GS} $L$ is \emph{isomorphic} to $\text{Con}(\mf A)$ for some algebra $\mf A$ on some set,
				but it is not in general possible to find $\mf A$ such that $L$ is \emph{equal} to $\text{Con}(\mf A)$. In fact, it suffices to take any finite lattice table that is not Malcev homogeneous in the sense of Definition 3.1 of \cite{Kjos-Hanssen:03}.
			\end{rem}

		\subsection{Principal congruences can be Turing complete}
				Let $\mf A$ be an algebra. The least congruence relation $\sim$ on $\mf A$ with $a\sim b$ is denoted by $C_{\mf A}(a,b)$
				and is called the principal congruence relation generated by the pair $(a,b)$.

				\begin{df}
					We say that the algebra $\mf A=\{f_n\mid n\in\omega\}$ is computable if the set
					\[
						\{\la \la x_1,\dots,x_k\rangle,y,n\ra\mid f_n(x_1,\dots,x_k)=y\}
					\]
					is computable.
					%We say that Con$(\mf A)=\{\Theta_n\mid n\in\omega\}$ is computable if $\{\la x,y,n\ra\mid x\Theta_n y\}$ is computable.
					%A \emph{computable congruence lattice} is a computable Con$(\mf A)$, equivalently a computable complete sublattice of $\Eq(\omega)$ which happens to be $\text{Con}(\mf A)$ for some algebra $\mf A$ on $\omega$. 
				\end{df}
				\begin{thm}
					There is a computable algebra $\mf A$ and $a,b\in A$ such that the Turing degree of $C_{\mf A}(a,b)$ is $0'$.
				\end{thm}
				\begin{proof}
					Let $0'=\{g(n)\mid n\in\omega\}$ where $g$ is computable, and let the operations of $\mf A$ be unary functions $\{f_s\}_{s\in\omega}$.
					Let $f_s(a_0)=a_{g(s)}$ and $f_s(b_0)=b_{g(s)}$,
					where $A=\{a_n\mid n\in\omega\}\cup\{b_n\mid n\in\omega\}$, a union of two disjoint infinite sets; let $f_s$ be the identity on $A\backslash\{a_0,b_0\}$.
					Then for $k>0$, $(a_k,b_k)\in C_{\mf A}(a_0,b_0)$ iff $k\in 0'$. So we can let $(a,b) = (a_0, b_0)$.
				\end{proof}

	\section{Reverse mathematics}
		We consider the following standard axiom systems of reverse mathematics \cite{Simpson}:
		\begin{itemize}
			\item $\RCA_{0}$ (recursive comprehension axiom);
			\item $\ACA_{0}$ (arithmetical comprehension axiom);
			\item $\PCA_{0}$ ($\Pi^{1}_{1}$-comprehension axiom);
			\item $\WKL_{0}$ (weak K\"onig's lemma);
			\item $\RT^{2}_{2}$ (Ramsey's theorem for pairs).
		\end{itemize}

		\begin{df}
			The axiom system $\GS$ (Gr\"atzer-Schmidt) consists of $\RCA_{0}$ plus the following axiom:
			For each algebraic lattice $L$ there exists
				\begin{enumerate}
					\item an algebra $\mf A$,
					\item a set $\{E_i\}_{i\in\omega}$ of congruences of $\mf A$ such that each congruence of $\mf A$ is one of the $E_i$, and
					\item an isomorphism $\varphi$ between $L$ and $\{E_i\}_{i\in\omega}$.
				\end{enumerate}
		\end{df}
		\begin{rem}%[November 5, 2010] 
			For this theorem to fall within the scope of reverse mathematics, for each countable lattice $L$, there must exist a \emph{countable} algebra $\mf A$ satisfying the properties above. That this is the case can be seen from Pudl\'ak's proof \cite{MR0429699} of the Gr\"atzer-Schmidt theorem, which we discuss in more detail below.
		\end{rem}

		\begin{df}
			Let $\GSD$ be the Gr\"atzer-Schmidt theorem for distributive lattices: \emph{every distributive algebraic lattice is isomorphic to the congruence lattice of an algebra}.
		\end{df}

	\section{Compact elements in algebraic lattices of restricted kinds}
		\subsection{Distributive lattices}
			As a contrast to the case of arbitrary lattices (Proposition \ref{compactHard}), in the distributive case the complexity of the set of compact elements reduces from $\Pi^{1}_{1}$ to $\Pi^{0}_{3}$ (Theorem \ref{distr3}). This is also sharp (Theorem \ref{d3hard}), which will enable us to show that $\WKL_0+\RT^2_2$ does not imply $\GSD$ (Corollary \ref{enabled}). We first need a proposition.
			\begin{proposition}[$\ACA_0$]\label{placed}
				If $L$ is a countable algebraic lattice and $a\in L$ is not compact then there is a witness $C\subseteq \{x\mid x<a\}$.
				Moreover, we can assume that $C=\{c_{i}\mid i\in \omega \}$ where the $c_{i}$ are strictly increasing. % and compact.
			\end{proposition}
			\begin{proof}
				Let $C=\{d_{i}\}$ witness the fact that $a$ is not compact. Thus $a\le\sup C$ but for each finite $C'\subset C$, $a\not\le\sup C'$.
				By closing under finite joins of initial segments and thinning out the sequence, we can assume that the $d_{i}$ are strictly increasing.

				As $L$ is algebraic, $a$ is the join of the compact elements $\le a$.
				Since moreover $a$ is not itself compact, $a$ is the join of the compact elements $c<a$.

				Since $a\le\sup_i d_i$, each compact $c\le a$ is below some $d_0\vee\dots\vee d_i = d_i$, and hence $c\le d_i\wedge a < a$.

				Thus $a=\bigvee_i (d_{i}\wedge a)$.
				Finally, let $\{c_i\}_{i\in\omega}$ be a strictly increasing subsequence of the sequence $\{d_{i}\wedge a\}_{i\in\omega}$.
			\end{proof}
			\begin{df}
				We say that \emph{$b$ is a coatom relative to $a$}, written $b\sqsubset a$, if 
				\[
					b<a\quad\text{and}\quad\lnot \exists y(b<y<a).
				\]
			\end{df}

			\begin{theorem}[$\ACA_0$]\label{distr3}
				In an algebraic countable distributive lattice $L$, the set $\{a\in L\mid a\text{ is compact}\}$ has the $\Pi _{3}^{0}(L)$ form
				\[
					\{a\in L\mid (\forall x<a)(\exists b)(x\le b\sqsubset a)\}.
				\]
			\end{theorem}

			\begin{proof}
				Fix $a\in L$. Let $B=\{b_{j}\}=\{b\mid b\sqsubset a\}$. We must show that $a$ is compact if and only if
				\[
					(\forall x<a)(\exists b)(x\le b\sqsubset a).%\text{every $x<a$ is below some $b\sqsubset a$,}
				\]
				\noindent\emph{Only if} direction: Assume that there is an $z<a$ with no $b\in B$ above it. Let 
				\[
					D=\{x<a\mid x\text{ is not below any }b\in B\}=\{d_{i}\}.
				\]
				Note that $D$ is nonempty by assumption and has no maximal elements by definition. We build an increasing sequence $c_{j}\in D$ such that for each $i$, $d_{i}\neq\vee c_{j}$.
				Again by our assumptions this guarantees that $\vee c_{j}=a$ as required to show that it is not compact. Let $c_{0}=z$ and suppose we have defined $c_{k}$.
				We want to choose $c_{k+1}>c_{k}$ in $D$ so as to guarantee that $d_{k}$ will not be the join of all the $c_{j}$.
				If $d_{k}\ngeq c_{k}$ then $d_{k}$ cannot be the join of the $c_{j}$ and we can take any $c\in D$ with $c>c_{k}$ as $c_{k+1}$.
				If $d_{k}\geq c_{k}$ we can take any $c\in D$ with $c>d_{k}$ as once again we have guaranteed that $d_{k}\neq \vee c_{j}$.

				\noindent \emph{If} direction: We suppose that every $x<a$ is below some $b\in B$ and, for the sake of a contradiction, that $a$ is not compact.
				Then by Proposition \ref{placed}, some $C=\{c_{i}\}$ (a strictly increasing sequence of elements below $a$) witnesses that $a$ is not compact.
				If $\exists j\forall i(c_{i}\leq b_{j})$ then $\vee c_{i}\leq b_{j}<a$ for any such $j$ contradicting our choice of $C$. Thus $\forall j\exists i(c_{i}\nleq b_{j})$.
				If $B$ is finite, there is an $i$ such that $\forall j(c_{i}\nleq b_{j})$ as the $c_{i}$ are increasing. This would contradict our case assumption.

				Finally, we suppose that $B$ is infinite. We build a nondecreasing sequence $d_{n}$ of elements strictly below $a$ with $d_{0}=c_{0}$ which has no join in $L$ below $a$
				for a contradiction to the completeness of $L$.
				Each $d_{k+1}$ will be of the form $b_{j_{1}}\wedge b_{j_{2}}\wedge \ldots \wedge b_{j_{k}}\wedge c_{l_{k}}$
				and its choice will guarantee that $x_{k}$ is not the join of all the $d_{n}$ where $L=\{x_{k}\}$.

				Suppose we have $d_{k}$ and want to define $d_{k+1}$. First ask if $(\exists b\in B)(b\geq d_{k}~\&~b\ngeq x_{k})$. If so, we let $b_{j_{k+1}}$ be such a $b$ and $l_{k+1}=l_{k}$.
				In this case $d_{k+1}=d_{k}$ and, by the intended form of our $d_{n}$,
				we have guaranteed that $b\geq d_{n}$ for every $n$ and so that $b\geq \vee d_{n}.$ As $b\ngeq x_{k}$, $x_{k}\neq \vee d_{n}$ as required.
				Otherwise, for every $b\in B$ with $b\geq c_{l_{k}}$, $b\geq x_{k}$. Choose one such $b$ not equal to any $b_{j_{m}}$, $m\leq k$, and a $p>l_{k} $ such that $b\ngeq c_{p}$.

				Note that $\{j\mid b_{j}\geq c_{i}\}$ is nonempty for every $i$ by our case assumption.
				Thus $\forall i\exists ^{\infty }j(b_{j}\geq c_{i})$ since otherwise (as the $c_{i}$ are increasing) there would be a finite set $F$ such that $\forall i\forall j\in F(b_{j}\geq c_{i})$ and so
				$\vee c_{i}\leq\wedge \{b_{j}\mid j\in F\}<a$ contradicting our choice of $C$.
				Also note that $\forall n\exists i\forall j\geq i(c_{j}\nleq b_{n})$ as otherwise $\forall j(c_{j}\leq b_{n})$ and so $\vee c_{i}\leq b_{n}$ again contradicting our choice of $C$.
		 
				Now let $j_{k+1}=j_{k}$ and $c_{l_{k+1}}=c_{p}$. As $c_{p}\geq c_{l_{k}}$, $d_{k+1}\geq d_{k}$. As $b\geq c_{l_{k}}$, $b\geq x_{k}$.
				On the other hand, $b$ is not any of the $b_{j_{m}}$ for $m\leq k+1$ and so is not above any of them. Moreover, it is not above $c_{p}=c_{l_{k+1}}$. Thus it is not above 
				\[
					d_{k+1}=b_{j_{1}}\wedge \ldots \wedge b_{j_{k+1}}\wedge c_{l_{k+1}}
				\]
				by distributivity, as we now show:

				As $b\in B$, $b\vee b_{j_{m}}=a=c_{l_{k+1}}\vee b$ for $m\leq k+1$. But if 
				\[
					b\geq b_{j_{1}}\wedge \ldots \wedge b_{j_{k+1}}\wedge c_{l_{k+1}}
				\]
				then 
				\[
					b=\left(\left(\bigwedge_{i=1}^{k+1} b_{j_{i}}\right)\wedge c_{l_{k+1}}\right)\vee b=(b\vee b_{j_{1}})\wedge \ldots \wedge (b\vee b_{j_{k+1}})\wedge (b\vee c_{l_{k+1}})
				\]
				but as $b\in B$ each of these terms
				(and so their join) is equal to $a$ for the desired contradiction. Thus $x\neq \vee d_{n}$ as required.
			\end{proof}

			\begin{proposition}[Folklore]
				For every $\Pi^0_3$ predicate $P$, there is a computable function $h(x, y)$ such that for all $x$ and $y$, $W_{h(x, y)}$ is an initial segment of $\omega$, and \[P(x) \Rightarrow (\forall y)(W_{h(x, y)} \text{ is finite})\] and \[\neg P(x) \Rightarrow (\exists! y)(W_{h(x, y)} = \omega). \]
			\end{proposition}
			\begin{proof}
				It is well-known (see, for example, Soare \cite{SoareBook}, Theorem 4.3.4) that there is a function $g(x, y)$ such that \[P(x) \Leftrightarrow (\forall y)(W_{g(x, y)} \text{ is finite}).\] We describe a uniform sequence $\{C_i\}_{i \in \omega}$ of c.e.\ sets. At each stage $s$ of the enumeration of this sequence, for each $y \le s$, there is a designated ``destination'' $i_{y, s} \in \omega$ for $W_{g(x, y)}$. By a ``new destination'', we mean the least $n \in \omega$ that has not yet been used as a destination.

				 At stage $s$, choose a new destination $i_{s, s}$. If it exists, let $z < s$ be the least such that a new element has just entered $W_{g(x, z)}$. Then 
					\begin{itemize}
						\item enumerate into $C_{i_{z, s}}$ the least element not already in it, and
						\item choose new destinations for $W_{g(x, y)}$ for all $y$ such that $z < y \le s$.
					\end{itemize}
				This describes the enumeration of $\{C_i\}_{i \in \omega}$.

				We verify that this sequence has the desired properties. If there is a $y$ such that $W_{g(x, y)}$ is infinite, then let it be the least such. After some stage $s$, new elements will cease to appear in $W_{g(x, y')}$ for $y' < y$, and $i_{y, s}$ will never again be redefined. Thus $C_{i_{y, s}} = \omega$. If $j \ne i_{y, s}$ is ever a destination for some $W_{g(x, z)}$ for some $z > y$, it will cease to be so when a new element is enumerated into $W_{g(x, y)}$, hence $C_j$ will be finite. On the other hand, if $W_{g(x, y)}$ is finite for all $y$, $C_j$ is finite for all $j$, since each such $j$ is ever a destination for $W_{g(x, y)}$ for exactly one value of $y$.

				Finally, let $W_{h(x, y)} = C_y$.
			\end{proof}

			\begin{thm}\label{d3hard}
				There is a computable distributive algebraic lattice for which the set of compact elements is complete $\Pi _{3}^{0}$. 
				%It may needs some filling in and should be checked but seems good to me now. Let me know what you think. -- Richard
			\end{thm}
			\begin{proof}
				Given a complete $\Pi _{3}^{0}$ set $P$, let $h$ be as in the proposition above. Our lattice shall contain elements $a_{i}$ for each $i<\omega$, and elements $a_{i,j,k}$ for each triple $(i,j,k)$ such that $ k \in W_{h(i, j)}$.
				The plan is that $a_{i}$ will be compact iff $P(i)$ holds. Let \[\alpha_i = \{a_{i, j, k} \mid k \in W_{h(i, j)} \text{ and } j \in \omega\},\] and $\Lambda = \{0, 1\} \cup \bigcup_i \{a_i\} \cup \bigcup_i \alpha_i$.

				The ordering among the elements of $\Lambda$ is specified by 
				\begin{eqnarray*}
					a_{i,j,k}\leq a_{\hat{\imath},\hat{\jmath},\hat{k}}~	&\quad\Longleftrightarrow\quad	& i=\hat{\imath}~\&~j=\hat{\jmath}~\&~k\leq \hat{k};\\
					a_{i}\leq a_{\hat{\imath}}						&\quad\Longleftrightarrow\quad	& i=\hat{\imath}; \\
					a_{i,j,k}\leq a_{\hat{\imath}}					&\quad\Longleftrightarrow\quad	& i=\hat{\imath};
				\end{eqnarray*}
				and no $a_{\hat{\imath}}$ is below any $a_{i,j,k}$. The top element $1$ is above all others in $\Lambda$, while $0$ is below.
				
				This determines joins betweens pairs of elements of $\Lambda$:
				\begin{align*}
					a_{i}\vee a_{\hat{\imath}} & = 1 \text{ for $i\neq \hat{\imath}$}\\
					a_{\hat{\imath}}\vee a_{i,j,k} & = \begin{cases}
															1 &\text{ for $i\neq \hat{\imath}$ and} \\ 
															a_{\hat{\imath}} & \text{ if $i=\hat{\imath}$;}
														\end{cases}\\
					a_{i,j,k}\vee a_{\hat{\imath},\hat{\jmath},\hat{k}} & = \begin{cases}
																				1 & \text{if $i\neq \hat{\imath}$; } \\
																				a_{i} & \text{ if $i=\hat{\imath}$ and $j\neq \hat{\jmath}$; and } \\
																				a_{i,j,k} & \text{ if $i=\hat{\imath}$, $j=\hat{\jmath}$ and $\hat{k}\leq k$. } \\
					\end{cases}\\
				\end{align*}
				
				These relations extend to arbitrary joins as follows: Let $\Lambda_0 \subseteq \Lambda$. If $\Lambda_0$ contains a pair that join up to $1$ then $\bigvee \Lambda_0 = 1$. Otherwise, all elements of $\Lambda_0$ have the same $i$.
				If there are two with different $j$ (or $a_{i}$ itself occurs) then $\bigvee \Lambda_0 = a_i$. Otherwise, they are all of the form $a_{i,j,k}$ for a fixed $i$ and $j$. If $\sup\{k \mid a_{i, j, k} \in \Lambda_0\} = \omega$, then $\bigvee \Lambda_0$ is again $a_{i}$. If it is $\hat{k} \in \omega$, then $\bigvee \Lambda_0 = a_{i,j,\hat{k}}$. Thus, $\Lambda$ is closed under arbitrary joins.

				To each element $x$ in $\Lambda$, we now associate a subset $\Gamma(x)$ of $\omega$. Let $B$ and $C$ be infinite uniformly computable sets such that $B \cup C = \omega$, and let $\{B_i\}_{i \in \omega}$ and $\{C_i\}_{i \in \omega}$ be partitions of $B$ and $C$, respectively, into infinite computable pairwise disjoint sets. Let $f_i: \omega^2 \rightarrow C_i$ be a uniform family of computable bijections. Now let
				\begin{eqnarray*}
					\Gamma(0) 			& = & \emptyset \\
					\Gamma(1) 			& = & \omega    \\
					\Gamma(a_i) 			& = & A_i = \omega \setminus B_i \\
					\Gamma(a_{i, j, k}) 	& = & A_{i, j, k} = \omega \setminus (B_i \cup \{f_i(j, \hat{k}) \mid \hat{k} > k\}).
				\end{eqnarray*}
				The following claims are easily verified:
				\begin{claim}
					For all $x, y \in \Lambda$, $x \le y \Leftrightarrow \Gamma(x) \subseteq \Gamma(y)$.
				\end{claim}
				\begin{claim}
					For any $\Lambda_0 \subseteq \Lambda$, $\Gamma(\bigvee \Lambda_0) = \bigcup_{x \in \Lambda_0} \Gamma(x)$.
				\end{claim}

				Let $L$ be the collection of sets obtained by closing the image of $\Gamma$ under finite intersections. The distributivity of union and intersection ensures that $L$ is also closed under finite unions. Thus $L$ is a distributive lattice, and its order extends the ordering on $\Lambda$ (which we identify with its image under $\Gamma$). The domain of our computable presentation of $L$ will be $\omega$: we can assume that only finitely many elements are enumerated into the uniformly c.e. sequence $W_{h(i, j)}$ at every stage, and for each such element and the finitely many new intersections it gives rise to, we allocate as yet unused natural numbers while ensuring that every natural number will be eventually allocated. We must now verify that 

				\begin{enumerate}
					\item the relations and operations on $L$ are computable, and
					\item $L$ is algebraic.
				\end{enumerate}

				To this end, we derive a normal form for the finite meets making up the lattice. Suppose $x \in L$ is neither $0$ nor $1$. It is an intersection of finitely many elements of the form $A_{i}$ and the form $A_{i,j,k}$.
				For each $i$, if any $A_{i,j,k}$ appears, we may eliminate all terms of the form $A_{i}$ and $A_{i,j,\hat{k}}$ except for the smallest $\hat{k}$ so occurring. We now have a normal form of $x$ given by
				\[
					x=\bigwedge _{i\in F}A_{i}\wedge \bigwedge _{i\in G}\bigwedge _{j\in G_{i}}A_{i,j,k_{i,j}}
				\]
				with $F,G,G_{i}$ finite nonempty sets and $F$ and $G$ disjoint. We say that \emph{$x$ is represented by $\left\langle F,G,\left\langle G_{i}\mid i\in G\right\rangle\right\rangle $}. It can be verified that the normal form representation of an element is unique.

				\begin{claim}		
					$L$ is computable as a lattice.
				\end{claim}		
				\begin{proof}
				Suppose we have 
				\[
					x=\bigwedge _{i\in F}A_{i}\wedge \bigwedge_{i\in G}\bigwedge _{j\in G_{i}} A_{i,j,k_{i,j}}\quad\text{and}\quad
					\hat{x}=\bigwedge _{i\in \hat{F}}A_{i}\wedge \bigwedge _{i\in \hat{G}}\bigwedge _{j\in \hat{G}_{i}} A_{i,j,\hat{k}_{i,j}}.
				\]
				We claim that 
				\[
					\tag{$\ast$}
					x\leq \hat{x}\quad\Longleftrightarrow\quad
					~\hat{F}\setminus F\subseteq G
					~\&~\hat{G}\subseteq G
					~\&~(\forall i\in \hat{G})(\hat{G}_{i}\subseteq G_{i})
					~\&~(\forall i\in \hat{G})(\forall j\in \hat{G}_{i})(\hat{k}_{i,j}\geq k_{i,j}).
				 \]
				 The conditions on the right hand side guarantee that every term in the meet forming $\hat{x}$ is greater than or equal to one of the terms whose meet is $x$, yielding the right-to-left implication. And if any of the conditions fails, then by the definitions of the sets $A_i$ and $A_{i, j, k}$, there is some element $n \in x$ such that $n \notin \hat{x}$, so $x\nleq \hat{x}$. Next, note that 
				\[
					x<\hat{x}\quad\Longleftrightarrow\quad x\leq \hat{x}~\&~x\neq \hat{x}\quad\Longleftrightarrow 
				\]
				\[ 
					(\ast)\text{ and }\hat{F}\neq F\text{ or }(\hat{F}=F\text{ and }G \setminus \hat{G}\neq \emptyset)\text{ or }(\exists i\in \hat{G})(\exists j\in \hat{G}_{i})(\hat{k}_{i,j}>k_{i,j}).
				\] 
				 Thus the relations $\le$ and $<$ on $L$ are computable from the normal forms of the elements of $L$. Meets can also be computed from the normal forms. Let 

				\begin{align*}
					G' & = G \cup \hat{G}\\
					F' & = (F \cup \hat{F}) \setminus G'\\
					G'_i & = \begin{cases}
								G_i &\text{ for $i \in G \setminus \hat{G}$} \\ 
								\hat{G}_i &\text{ for $i \in \hat{G} \setminus G$} \\ 
								G_i \cup \hat{G}_i &\text{ for $i \in G \cap \hat{G}$}
								\end{cases}\\
					k'_{i, j} & = \begin{cases}
								k_{i, j} &\text{ if $\hat{k}_{i, j}$ is undefined} \\ 
								\hat{k}_{i, j} &\text{ if $k_{i, j}$ is undefined} \\ 
								\min(k_{i, j}, \hat{k}_{i, j}) &\text{ if both are defined.}
								\end{cases}\\
				\end{align*}
				Then
				 \[x \wedge \hat{x} = \bigwedge_{i \in F'} A_i \wedge \bigwedge_{i \in G'} \bigwedge_{j \in G'_i} A_{i, j, k'_{i, j}} .\]

				 Joins can be computed by converting to a meet of joins of elements of $\Lambda$ using distributivity, then applying the rules for joins of elements of $\Lambda$, and finally reducing the meet to normal form.
				 \end{proof}
				 \begin{claim}
				 	$L$ is complete.
				 \end{claim}
	
				\begin{proof} 
				First, we consider an arbitrary (infinite) meet $\bigwedge_n x_{n}$. We may assume that $x_{n+1}\leq x_{n}$ and if the sequence is not eventually constant (and so its meet a finite one) that $x_{n+1}<x_{n}$. We claim any such meet is $0$. Suppose $x_{n}$ is represented by $\left\langle F^{n},G^{n},\left\langle G_{i}^{n}\mid i\in G^{n}\right\rangle\right\rangle $.
				If the $F^{n}$ are not eventually constant then there is an infinite set of $i$ that eventually appear in them and so the meet is below $A_{i}$ for infinitely many $i$.
				The only such element is $0$. Next, say the $F^{n}$ are eventually equal to $F$.
				If after they have settled down the $G^{n}$ are not eventually constant, and say equal to $G$, then there are infinitely many $i$ eventually appearing in the $G^{n}$
				and so the meet is below some $A_{i,j,k}$ for infinitely many $i$ and so also below infinitely many $A_{i}$. Therefore, it is once again $0$.
				So suppose $F^{n}$ and $G^{n}$ have stabilized by $n_{0}$.
				The only way $x_{n+1}<x_{n}$ for $n>n_{0}$ is for some $k_{i,j}^{n+1}$ to be smaller than $k_{i,j}^{n}$ for some $i\in G$.
				But this can happen only finitely often and so the meet eventually stabilizes, which is a contradiction.

				Next, consider an infinite join $\bigvee _{n}x_{n}$. Let
				\[
					y = \bigwedge \{z\mid \forall n(z\geq x_{n})\}
				\]
				which exists by the argument above. Clearly, $y$ is the least element of $L$ above every $x_{n}$.
				\end{proof}

				\begin{claim}
					$L$ is algebraic.
				\end{claim}
				\begin{proof}
				We determine the compact elements of $L$. It is easy to see that $1$ and $0$ are among them.
				If $P(i)$ fails let $j_i$ denote the unique witness such that $W_{h(i, j_i)}$ is infinite. Suppose $x\neq 0,1$ has the normal form
				\[\bigwedge _{i\in F}A_{i}\wedge \bigwedge _{i\in G}\bigwedge _{j\in G_{i}}A_{i,j,k_{i,j}}.\]
				We claim that $x$ is compact if and only if \[\tag{$\dagger$} (\forall i\in F)(P(i)) \text{ and }(\forall i \in G)(P(i) \text{ or } j_{i}\in G_{i}).\] First, suppose that $x$ is compact.
				If there is an $i'\in F$ such that $P(i')$ fails, then let $y_{k}$ be obtained by replacing the term $A_{i'}$ by $A_{i',j_{i'},k}$ in $x$, i.e., \[y_k = A_{i', j_{i'}, k} \wedge \bigwedge _{i\in F, i \neq i'}A_{i}\wedge \bigwedge _{i\in G}\bigwedge _{j\in G_{i}}A_{i,j,k_{i,j}}.\]
				It is clear that each $y_{k} < x$ and so $\bigvee_k y_{k}\leq x$.
				On the other hand, if $z$ is such that  $y_k \le z<x$, then, by our characterizations of the relations $\le$ and $<$, it must be some $y_{k'}$ for $k' \ge k$. It follows that $\bigvee_k y_{k}=x$, but no finite join suffices.

				Next, suppose that $P(i')$ fails for some $i'\in G$ and $j_{i'}\notin G_{i'}$. Let $y_{k}=x\wedge A_{i',j_{i'},k}$. An argument similar to the one above shows that $\bigvee_k y_{k}=x$ while no finite join is $x$.

				Next, we argue that if the condition $(\dagger)$ holds then $x$ is compact.
				Consider any $\bigvee_n x_{n}\geq x$. We may assume that if the join is not achieved at any finite stage then the $x_{n}$ are strictly increasing. Suppose \[x_n = \bigwedge _{i\in F^n}A_{i}\wedge \bigwedge _{i\in G^n}\bigwedge _{j\in G^n_{i}}A_{i,j,k^n_{i,j}}.\]
				It is clear from the characterization of $<$ that the $F^{n}$, $G^{n}$ and $G_{i}^{n}$ must eventually stabilize, say to $\bar{F}$, $\bar{G}$ and $\bar{G}_{i}$ for $i\in \bar{G}$.
				After stabilization, for $i\in \bar{G}$ such that $P(i)$ holds, or such that $P(i)$ fails but $j\in \bar{G}_{i}$ is not equal to $j_i$,  the $k_{i,j}^{n}$ are also eventually constant (since in either case, there are only finitely many of them). However, there must be some $i\in \bar{G}$ such that $P(i)$ fails and $j_{i}\in \bar{G}_{i}$, and for at least one such $i$, 	the $k_{i,j_{i}}^{n}$ must be unbounded. Let 
				\[
					H=\{i\mid (\forall m)(\exists n>m)(k_{i,j_{i}}^{n}>m)\}\quad\text{and}\quad
					K=H\cap \{i \mid \bar{G}_{i}=\{j_{i}\}\}.
				\]

				It is not difficult to see that $\bigvee_n x_n$ is represented by $\left\langle \bar{F}\cup K,\bar{G}\setminus K,\left\langle \hat{G}_{i}\mid i\in \bar{G}\setminus K\right\rangle \right\rangle $, where $
				\hat{G}_{i}=\bar{G}_{i}\setminus \{j_{i}\}$ if $i\in H$ and $\hat{G}_{i}=\bar{G}_{i}$ if $i\notin H$. Now,
				\begin{itemize}
					\item $\bar{F} \setminus F \subseteq G$, since $\bigvee_n x_n \ge x$, and so $(\bar{F} \cup K) \setminus F \subseteq G$
					\item $\bar{G} \subseteq G$, since otherwise, there is an $i \in K$ such that $i \notin G$, which means that $i \in F$, contradicting $(\dagger)$
					\item $(\forall i \in \bar{G} \setminus K)(\bar{G_i} \subseteq G_i)$, since if $P(i)$ holds, then $i \notin H$, and so $\hat{G}_i = \bar{G_i} \subseteq G_i$, and if $j_i \in G_i$, then  $\bar{G_i} \subseteq \hat{G}_i \cup \{j_i\} \subseteq G_i$
					\item $(\forall i \in K)(\bar{G_i} \subseteq G_i)$, since for all $i \in K$, $P(i)$ fails and therefore, by $(\dagger)$,  $j_i \in G_i$, and $\bar{G}_i = \{j_i\}$.
				\end{itemize}
				Therefore, for sufficiently large $n$, $x_n \ge x$.

				The above analysis shows that if $x$ is not compact it is the join of the compact elements below it:
				Define $y_{n}$ by replacing in the meet producing $x$ each $A_{i}$ ($i\in F$) such that $P(i)$ fails by $A_{i,j_{i},n}$ and,
				for each $i\in G$ for which $P(i)$ fails and $j_{i} \notin G_{i}$, adding $A_{i,j_{i},n}$ to the meet.
				Our characterization of the compact elements shows that each $y_{n}$ is compact. Our analysis of the order shows that $\bigvee y_{n}=x$.
				\end{proof}
				This completes the proof of the theorem.
			\end{proof}
			\begin{corollary}[$\RCA_0$]
				The following principle is equivalent to $\ACA_0$:
				``For each countable distributive lattice $L$, there is a set consisting exactly of the compact elements of $L$.''
			\end{corollary}
			\begin{proof}[Proof sketch.]
				To prove $\ACA_0$ from this principle, use the following construction.
				Let $L_n$ have a top element $t_n$ preceded by a finite sequence if $n\in 0'$ and an $\omega$-sequence if $n\not\in 0'$.
				Let $L$ be the sum of the linear orders, so that $t_n$ is compact iff $n\in 0'$.
				Then $L$ is a linear order, and hence in particular a distributive lattice.
			 \end{proof}
			\begin{figure}
				\[
					 \xymatrix{
					 						& \Pi^{1}_{1}-CA_{0}\ar[d]\ar[dr]	&				\\
											& ACA_{0}\ar[d]\ar[dr]\ar[dl]		& GS\ar[d]		\\
						RT^{2}_{2}\ar[dr]	& WKL_{0}\ar[d]						& GSD\ar[dl]	\\
											& RCA_{0}							&				\\
					 }
				\]
				\caption{Reverse mathematics of the Gr\"atzer-Schmidt theorem over $\RCA_{0}$.}\label{rm}
			\end{figure}
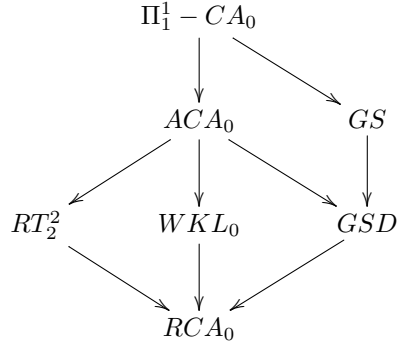
			\begin{corollary}\label{enabled}
				$\WKL_{0}+\RT_{2}^{2}\not\models\GSD$. 
			\end{corollary}
			\begin{proof}
				As the set of compact elements of a computable congruence lattice is $\Sigma _{2}^{0}$, the construction for Theorem \ref{d3hard} guarantees that
				any standard model of $\GSD$ includes a set $C$ such that the complete $\Pi _{3}^{0}$ set is $\Sigma _{2}^{0}$ in $C$ and so $C^{\prime\prime}\ge _{T}0^{\prime\prime \prime }$.
				There are, however, standard models of $\WKL_{0}+\RT_{2}^{2}$ in which all sets are low$_{2}$ \cite{CJS}, so $C''\equiv_{T}0''$.
			\end{proof}

			\begin{rem}
			\label{PudCon}
			Let $L$ be a countable algebraic lattice and let $K$ be the set of its compact elements, which is an upper semilattice. Pudl\'ak's proof \cite{MR0429699} of the Gr\"atzer-Schmidt Theorem proceeds by constructing a ``$K$-valued graph'' $(A, r, h)$, where $A$ is a set of vertices, $r$ a set of (undirected) edges, and $h: r \rightarrow K$ a surjective ``coloring'' of each edge by a compact element. A mapping $f: A \rightarrow A$ is said to be \emph{stable} if it respects the coloring in the following sense: for every edge $\{a, b\} \in r$, either $f(a) = f(b)$ or $h(\{a, b\}) = h(\{f(a), f(b)\})$. Then letting $F$ be the family of all stable mappings on $A$, the unary algebra $(A, F)$ satisfies the requirements of the theorem, i.e., $\text{Con}(A, F)$ is isomorphic to $L$.

			An inspection of this construction reveals that the $K$-valued graph $(A, r, h)$ is computable in $K$. Further, it suffices to choose a countable subfamily $\{f_n \mid n \in \omega\}  \subseteq F$ of stable mappings that are uniformly computable in $K$, so that $\text{Con}(A, \{f_n \mid  n \in \omega\}) \cong L$. 

			For $a, b \in A$, let $a \sim_x b$ if there is a path in $(A, r, h)$ connecting $a$ and $b$ all of whose edges are colored with compact elements that are less than or equal to $x$. It can then be shown that the map $\varphi : x \mapsto\, \sim_x$ is an isomorphism between $L$ and $\text{Con}(A, \{f_n \mid n \in \omega\})$. Moreover, $\varphi$ is $\Sigma^0_1$-definable in $K$. In particular, there is a presentation of $\text{Con}(A, \{f_n \mid n \in \omega\})$ that is arithmetical in $K$.

 			\end{rem}
				
			\begin{proposition}
				We have the following provability results:
				\begin{enumerate}
					\item $\PCA_0\vdash \GS$.
					\item $\ACA_{0}\vdash\GSD$.
				\end{enumerate}
			\end{proposition}
			\begin{proof}[Proof]
				For (1), note that $\PCA_0$ guarantees the existence of the set $K$ of compact elements in a given lattice $L$, and by the remark above, the congruence lattice and the isomorphism can be chosen to be arithmetical in $K$.

				For (2), Theorem \ref{distr3} shows that the set of compact elements of a computable algebraic distributive lattice is $\Pi _{3}^{0}$, and thus the congruence lattice and the isomorphism are, in this case, arithmetical.
			\end{proof}

		\subsection{Modular lattices}
			While we do not know whether the set of compact elements in a modular lattice must be $\Pi^0_3$, we do know that the characterization of compact elements in Theorem \ref{distr3} does not extend from distributive to modular lattices.

			The following fact is well-known:
			\begin{lem}\label{lem:compactly_generated}
				In an algebraic lattice, each element is the supremum of the compact elements below it.
			\end{lem}
			\begin{rem}
				An example to keep in mind: consider $(\omega+1)\times 2$.
				Let $a=(\omega,1)$. Then $a$ is not compact.
			\end{rem}

			\begin{thm}
				Let $L$ be a modular algebraic lattice and $a$ be in $L$. If $a$ is compact, then for each interval $(b, a)$, there is a $c \in (b, a)$ such that the interval $(c, a)$ is empty (we say that \emph{$a$ covers $c$}). However, the converse does not hold.
			\end{thm}
			\begin{proof}
				If $a$ does not cover any element in $(b,a)$, one can construct an infinite chain whose supremum is $a$ but for any finite subchain, the supremum is strictly below $a$, contradicting compactness.

				For a counterexample to the converse, consider the countably-infinite dimensional vector space $V$ over $\mathbb Z/2\mathbb Z$ consisting of all finite subsets of $\mathbb N$, viewed as finite characteristic functions, with mod-two addition or equivalently:
				\[
					A + B = (A \setminus B) \cup (B\setminus A).
				\]
				Let $V_n$ be the subspace of $V$ consisting of subsets of $\{1,\dots,n\}$. Then the supremum of $\{V_n:n\in\omega\}$ is $V$ but clearly the supremum of any finite subset of the $V_n$ is contained in some $V_k$.  On the other hand each proper subspace of $V$ is contained in a codimension 1 subspace.
			\end{proof}

	\section*{Acknowledgments}
		This paper is an extended and corrected version of the conference paper \cite{MR2545880}.
		The authors thank Bakh Khoussainov for useful suggestions.
		This work was partially supported by a grant from the Simons Foundation (\#315188 to Bj\o rn Kjos-Hanssen).
		Paul K. L. V.\ Nguyen was partially supported by the National Science Foundation under Grant No. 0841223.
		Richard A.\ Shore was partially supported by NSF grants DMS-0852811 and DMS-1161175.
		Bj\o rn Kjos-Hanssen was partially supported by NSF grant DMS-0901020.
	\bibliography{Graetzer-Schmidt}
\end{document}